\def\BBox{\kern  -0.2cm\hbox{\vrule width 0.2cm height 0.2cm}}
\newtheorem{lemma}{Lemma}[section]
\newtheorem{theorem}{Theorem}[section]
\newtheorem{definition}{Definition}[section]
\newtheorem{corollary}{Corollary}[section]
\newtheorem{remark}[theorem]{Remark}
\title{The Graphicahedron}
\author{
Gabriela Araujo-Pardo\thanks{Supported by CONACYT 5737 garaujo@matem.unam.mx}, Maria Del R\'io-Francos\thanks{mad210fcos@gmail.com}, \\
Mariana L\'opez-Dudet\thanks{mdudet17@gmail.com}, 
Deborah Oliveros\thanks{Supported by CONACYT CCDG 50151 dolivero@matem.unam.mx}\\
{\small  Instituto de Matem\'{a}ticas}\\
{\small  Universidad Nacional Aut\'{o}noma de M\'{e}xico, M\'{e}xico}
\\[1ex]
Egon Schulte\thanks{schulte@neu.edu}
\thanks{Supported by NSA-grant 
H98230-07-1-0005}\\
{\small Department of Mathematics}\\
{\small Northeastern University, Boston, USA}\\ }
\begin{document}
\maketitle

\begin{abstract}
The paper describes a construction of abstract polytopes from Cayley graphs of symmetric groups. Given any connected graph $G$ with $p$ vertices and $q$ edges, we associate with $G$ a Cayley graph ${\cal G}(G)$ of the symmetric group $S_p$ and then construct a vertex-transitive simple polytope of rank $q$, the ${\it graphicahedron}$, whose $1$-skeleton (edge graph) is ${\cal G}(G)$. The graphicahedron of a graph $G$ is a generalization of the well-known permutahedron; the latter is obtained when the graph is a path. We also discuss symmetry properties of the graphicahedron and determine its structure when $G$ is small.
\end{abstract}


{\bf Key words.} ~ Permutahedron, abstract polytopes, Cayley Graphs.

{\bf MSC 2000.} ~ Primary: 51M20. Secondary: 05C25, 52B15.


\section{Introduction}
In the present paper we describe an interesting abstract polytope, called the {\em graphicahedron}, which generalizes the well-known permutahedron. The permutahedron $\Pi_{n}$ is a simple convex $n$-polytope in $\mathbb{R}^{n+1}$ that was apparently first investigated by Schoute in 1911 (see \cite{fom,S11,Z95}); it was discovered in Guilbaud \& Rosenstiehl~\cite{guro} in 1963 and given the name ``permutohedron" (or rather ``permuto\`{e}dre", in French). It is the convex hull of all points in $\mathbb{R}^{n+1}$ obtained from $(1,2,\ldots,n+1)$ by permuting the coordinates in all possible ways. In particular, its vertices can be identified with the permutations in the symmetric group $S_{n+1}$ in such a way that two vertices of $\Pi_{n}$ are connected by an edge if and only if the corresponding permutations differ by an adjacent transposition.

Our construction of the graphicahedron builds on Cayley graphs of symmetric groups. Given any connected graph $G$ with $p$ vertices and $q$ edges, we associate with $G$ a Cayley graph ${\cal G}(G)$ of the symmetric group $S_p$ and then construct a vertex-transitive simple polytope of rank $q$, the graphicahedron ${\cal P}_G$, whose $1$-skeleton (edge graph) is ${\cal G}(G)$. The generating set of $S_p$ defining ${\cal G}(G)$ consists of the transpositions associated with the edges of $G$. If $G$ is a path of length $n$, then ${\cal P}_G$ is isomorphic to $\Pi _{n}$. 

The paper is organized as follows. In Sections~\ref{grpol} and \ref{cgra}, we review basic notation and in particular describe the Cayley graph ${\cal G}(G)$ associated with a given graph $G$. Then, in Section~\ref{hedron}, we define the graphicahedron ${\cal P}_G$, establish that it is an abstract polytope, and prove that its $1$-skeleton is the Cayley graph ${\cal G}(G)$. In Section~\ref{group}, we find the structure of the automorphism group of ${\cal P}_G$ and then enumerate the graphicahedra that are regular polytopes. Finally, in Section~\ref{examples}, we describe in more detail the geometric structure of the graphicahedra when $G$ is a path, a cycle, a star graph, or a small graph with up to four edges. We recover the classical permutahedra as the graphicahedra associated with paths, and find other interesting highly-symmetric polytopes, including various locally toroidal polytopes.

\section{Graphs and polytopes}
\label{grpol}
 
We briefly review some important concepts for graphs and polytopes, beginning with graphs. For further basic definitions and terminology on graphs and polytopes the reader is referred to Chartrand \& Lesniak~\cite{CL96} and McMullen \& Schulte~ \cite{McMS02}, respectively. 

Throughout, $G=(V(G),E(G))$ denotes a simple graph, without loops or multiple edges and with vertex-set $V(G)$ and edge-set $E(G)$. We say that $G$ is a $(p,q)$-graph if $|V(G)|=p$ and $|E(G)|=q$. We always assume that $p$ and $q$ are finite. A subgraph of $G$ is called a {\em spanning\/} subgraph of $G$ if it contains all the vertices of $G$. The spanning subgraphs of $G$ are in one-to-one correspondence with the subsets of $E(G)$. In fact, any subset of $E(G)$ uniquely defines a spanning subgraph of $G$, of which it is the full edge set, and vice versa. For any two spanning graphs $H$ and $H'$ of $G$ we write $H\subseteq H'$ if $E(H)\subseteq E(H')$. 
 
An {\it abstract polytope} of rank $n$, or simply an {\it $n$-polytope}, is a partially ordered set (or {\it poset} for short) ${\cal P}$ with a strictly monotone rank function having range $\{-1,0,\ldots,n\}$. The elements of ${\cal P}$ are called {\em faces\/}, or $j$-{\it faces} if their rank is $j$. The faces of ranks 0, 1 or $n-1$ are also called the {\it vertices}, {\it edges} or {\it facets} of $\cal P$, respectively. Moreover, ${\cal P}$ has a smallest face (of rank $-1$) and largest face (of rank $n$), denoted by $F_{-1}$ and $F_n$, respectively; they are the {\it improper faces} of ${\cal P}$. Each {\it flag} (maximal chain) of ${\cal P}$ contains exactly $n+2$ faces. Two flags are said to be {\it adjacent} if they differ in exactly one face; they are {\em $j$-adjacent\/} if this face has rank $j$. In $\cal P$, any two flags $\Phi$ and $\Psi$ can be joined by a sequence of flags $\Phi=\Phi_0,\Phi_1,...,\Phi_k=\Psi$, all containing $\Phi \cap \Psi$, such that any two successive flags $\Phi_{i-1}$ and $\Phi_i$ are adjacent; this property is known as the {\it strong flag-connectedness} of ${\cal P}$. Finally, $\cal P$ has the following {\it homogeneity  property}, often called the {\it diamond condition}:\  whenever $F\leq G$, with ${\rm rank}(F)=j-1$ and ${\rm rank}(G)=j+1$, there are exactly two faces $H$ of rank $j$ such that $F\leq H\leq G$.

Let ${\cal P}$ be an $n$-polytope, and let $0\le k\le n-1$. The {\it $k$-skeleton} of ${\cal P}$ is the poset consisting of all proper faces of ${\cal P}$ of rank at most $k$ (together with the induced partial order).

\section{Cayley Graphs}
\label{cgra}

Given a finite group $\Gamma$ and any subset $\cal T$ of $\Gamma$ consisting of involutions, the {\em Cayley graph of\/} $\Gamma$ {\em with respect to\/} $\cal T$, denoted by ${\cal G}(\Gamma,{\cal T})$, is the graph with vertex-set $\Gamma$ such that two vertices $\gamma_1$ and $\gamma_2$ are adjacent (connected by an edge) if and only if $\gamma_2=\tau\gamma_1$ for some $\tau\in {\cal T}$. Here we slightly abuse standard notation and do not require $\cal T$ to be a generating set of $\Gamma$, although this will usually be the case. Note that ${\cal G}(\Gamma,{\cal T})$ is connected (that is, ${\cal G}(\Gamma,{\cal T})$ is a Cayley graph in standard terminology) if and only if $\cal T$ is a generating set of $\Gamma$. We are primarily interested in Cayley graphs of symmetric groups $S_p$.

Let $G$ be a $(p,q)$-graph with vertex set $V(G):=\{1,\ldots,p\}$ and edge set $E(G)=\{e_1,\ldots,e_q\}$, where $p\geq 1$ and $q\geq 0$ (if $q=0$, then $E(G)=\emptyset$). In most applications, $G$ will be connected. We associate with $G$ a Cayley graph on $S_p$ as follows. If $e = \{i,j\}$ is an edge of $G$ (with vertices $i$ and $j$), define ${\tau}_{e}:= (i\;j)$; this is the transposition in $S_p$ that interchanges $i$ and $j$. Let ${\cal T}_G:=\{\tau_{e_1},\ldots,\tau_{e_q}\}$ denote the set of transpositions determined by the edges of $G$, and let $T_G:=\langle\tau_{e_1},\ldots,\tau_{e_q}\rangle$ denote the subgroup of $S_p$ generated by ${\cal T}_G$. If $G$ is connected, then $T_{G}=S_p$ (see Lemma~\ref{propoconnected} below). More generally, if $K\subseteq E(G)$, then we define ${\cal T}_K:=\{\tau_{e}\mid e\in K\}$ and $T_K:=\langle\tau_{e}\mid e\in K\rangle$. If $K$ happens to be the full edge set of a subgraph $H$ of $G$, then we also write ${\cal T}_H$ or $T_H$ in place of ${\cal T}_K$ and $T_K$; that is, ${\cal T}_{H}:={\cal T}_{E(H)}$ and $T_{H}:=T_{E(H)}$. If $K=\emptyset$, then $T_K$ is the trivial group.

Now the {\em Cayley graph of $G$}, denoted by ${\cal G}(G)$, is the Cayley graph of $S_p$ with respect to ${\cal T}_G$; that is, ${\cal G}(G):={\cal G}(S_p,{\cal T}_G)$. Thus $V({\cal G}(G))=S_p$, and $\{ \gamma_1,\gamma_2 \} \in E({\cal G}(G))$ if and only if $\tau_{e}\gamma_{1} =\gamma_2$ for some $e\in E(G)$. For instance, if $G$ has no edges, then ${\cal G}(G)$ also has no edges; and if $G$ has only one edge, then ${\cal G}(G)$ is a matching on $S_p$. On the other hand, connected graphs are characterized by the following lemma.

\begin{lemma}\label{propoconnected} 
The graph $G$ is connected if and only if ${\cal T}_G$ generates $S_p$. Equivalently, $G$ is connected if and only if ${\cal G}(G)$ is connected.
\end{lemma}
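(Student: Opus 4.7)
The plan is to prove the two stated equivalences by handling the biconditional ``$G$ connected iff $T_G = S_p$'' directly, and then deducing the second equivalence from the remark already noted in Section~\ref{cgra} that a Cayley graph ${\cal G}(\Gamma,{\cal T})$ is connected precisely when ${\cal T}$ generates $\Gamma$. So the real content is the first biconditional.

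For the forward direction, I would assume $G$ is connected and show that $T_G$ contains every transposition $(i\;j)$ of $S_p$; since the transpositions generate $S_p$, this forces $T_G = S_p$. Given $i,j\in V(G)$, connectedness supplies a path $i=v_0,v_1,\ldots,v_k=j$ in $G$, so each $\tau_{\{v_{r-1},v_r\}}=(v_{r-1}\;v_r)$ lies in ${\cal T}_G$. I would then express $(i\;j)$ as an iterated conjugate of adjacent transpositions along this path, for instance by induction on $k$ using the standard identity
\[
(v_0\;v_k)=(v_{k-1}\;v_k)\,(v_0\;v_{k-1})\,(v_{k-1}\;v_k),
\]
so that from $(v_0\;v_{k-1})\in T_G$ one gets $(v_0\;v_k)\in T_G$.

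For the reverse direction, I would argue by contrapositive. If $G$ is disconnected, then $V(G)$ partitions into connected components $V_1,\ldots,V_s$ with $s\geq 2$. Every edge of $G$ lies inside a single component, so every generator $\tau_e$ fixes the partition setwise. Hence $T_G$ is contained in the Young subgroup $S_{V_1}\times\cdots\times S_{V_s}$, which is proper in $S_p$. In particular, any transposition swapping elements in different components lies outside $T_G$, so $T_G\neq S_p$.

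The only mildly delicate step is the conjugation identity in the forward direction; everything else is routine. Finally, the equivalence with connectedness of ${\cal G}(G)={\cal G}(S_p,{\cal T}_G)$ follows immediately from the general fact already recorded for Cayley graphs.
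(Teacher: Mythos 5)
Your proposal is correct and follows essentially the same route as the paper: the forward direction uses the same conjugation-along-a-path idea (your inductive identity unfolds to exactly the paper's explicit product $\tau_{f_r}\cdots\tau_{f_2}\tau_{f_1}\tau_{f_2}\cdots\tau_{f_r}$), and the reverse direction is the same observation that $T_G$ sits inside the proper Young subgroup determined by the connected components. No gaps.
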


\begin{proof}
Suppose $G$ is connected. We only need to verify that every transposition of $S_p$ is generated by ${\cal T}_G$. Let $(i\;j)$ be any transposition in $S_p$. Since $G$ is connected, there exist a path along edges of $G$ connecting the vertices $i$ and $j$ of $G$. (If $\{i,j\}\in E(G)$, then the path has just one edge.) Suppose the path transverses the edges $f_{1},\ldots,f_{r}$ of $G$, in this order, so that $i$ is the first vertex of $f_{1}$ and $j$ is the last vertex of $f_{r}$. Then 
$(i\;j)=\tau_{f_r}\cdots\tau_{f_2}\tau_{f_1}\tau_{f_2}\cdots\tau_{f_r}$. 

Conversely, suppose $G$ is not connected. If $H$ is a connected component of $G$ and ${\cal T}_H$ is the corresponding set of transpositions, then the subgroup $T_H$ generated by ${\cal T}_H$ is the symmetric group $S_{V(H)}$ on the vertex-set $V(H)$ of $H$ and is a proper subgroup of $S_{p}=S_{V(G)}$. It follows that the subgroup $T_{G}$ generated by${\cal T}_G$ is isomorphic to the direct product of symmetric groups $S_{V(H)}$, where $H$ runs o ver all connected components of $G$. In any case, $T_G$ is a proper subgroup of $S_p$. Thus 
${\cal T}_G$ fails to generate $S_p$.
\end{proof}

Clearly, if $G$ and $G'$ are isomorphic $(p,q)$-graphs then the corresponding Cayley graphs ${\cal G}(G)$ and ${\cal G}(G')$ are also isomorphic. The following lemma summarizes certain basic properties of the Cayley graphs ${\cal G}(H)$ associated with spanning subgraphs $H$ of $G$; they all are subgraphs of ${\cal G}(G)$.

\begin{lemma}\label{remarks} 
(a) A subgraph $H$ of $G$ is a spanning subgraph of $G$ if and only if ${\cal G}(H)$ is a spanning subgraph of ${\cal G}(G)$. \\
(b) If $H$ and $H'$ are spanning subgraphs of $G$ and $H\subseteq H'$, then also ${\cal G}(H)\subseteq {\cal G}(H')$.\\
(c) For any $k$ with $0\leq k\leq q$ we have ${\cal G}(G)=\cup {\cal G}(G_k)$, where  $G_k$ runs over all spanning subgraphs of $G$ with $k$ edges.
\end{lemma}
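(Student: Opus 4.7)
All three claims reduce to bookkeeping about vertex sets and the transposition sets ${\cal T}_H\subseteq {\cal T}_G$, so I would dispatch (a) and (b) quickly and use them as stepping stones for (c).

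For part (a), I would split the equivalence into two implications. In the forward direction, if $H$ is a spanning subgraph of $G$, then $V(H)=V(G)=\{1,\ldots,p\}$, so ${\cal G}(H)$ is a Cayley graph on $S_p$ and shares its vertex set with ${\cal G}(G)$; and since $E(H)\subseteq E(G)$ forces ${\cal T}_H\subseteq {\cal T}_G$, every edge of ${\cal G}(H)$ (arising from left-multiplication by some $\tau_e$ with $e\in E(H)$) is already an edge of ${\cal G}(G)$. Conversely, if ${\cal G}(H)$ is a spanning subgraph of ${\cal G}(G)$, then the vertex sets of ${\cal G}(H)$ and ${\cal G}(G)$ coincide, which forces the underlying symmetric group to be $S_p$ on both sides, i.e., $V(H)=V(G)$. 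Part (b) is then immediate: $H\subseteq H'$ means $E(H)\subseteq E(H')$ and hence ${\cal T}_H\subseteq {\cal T}_{H'}$, so each edge of ${\cal G}(H)$ sits inside ${\cal G}(H')$, and both are spanning subgraphs of ${\cal G}(G)$ by (a).

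For part (c), I would fix $k$ with $1\le k\le q$ (the remaining case $q=0,\ k=0$ being trivial) and prove the two inclusions separately. The inclusion $\cup\, {\cal G}(G_k)\subseteq {\cal G}(G)$ is just (b) applied to every spanning subgraph $G_k\subseteq G$ with $k$ edges. For the reverse inclusion, I would pick an arbitrary edge $\{\gamma_1,\gamma_2\}$ of ${\cal G}(G)$, write $\gamma_2=\tau_e\gamma_1$ for some $e\in E(G)$, and exhibit a $k$-edge spanning subgraph containing $e$ by adjoining any $k-1$ further edges from $E(G)\setminus\{e\}$ — which is possible precisely because $|E(G)\setminus\{e\}|=q-1\ge k-1$. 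Then $\{\gamma_1,\gamma_2\}\in E({\cal G}(G_k))$, and we are done. No serious obstacle is foreseen; the only point requiring care is this counting remark, which is exactly what the hypothesis $1\le k\le q$ is there to supply.
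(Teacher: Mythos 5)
Your proof is correct; the paper in fact states Lemma~\ref{remarks} without any proof, treating it as a routine consequence of the definitions, and your verification (vertex sets of the Cayley graphs are $S_{|V(H)|}$, so spanning is preserved and reflected; edge containment follows from ${\cal T}_H\subseteq{\cal T}_{H'}$; and every edge of ${\cal G}(G)$ arises from some $\tau_e$ and hence lies in ${\cal G}(G_k)$ for any $k$-edge spanning subgraph containing $e$) is exactly the expected one. Your parenthetical restriction to $1\le k\le q$ in part (c) is also well spotted: as literally stated, the $k=0$ case of (c) fails whenever $q\ge 1$, since the union on the right then has no edges, so the claim is really only meaningful for $k\ge 1$ (or for the edgeless graph with $q=0$).
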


Figures~\ref{P_2} and \ref{C_3} show the Cayley graphs of the path $G=P_2$ of length $2$ and the cycle $G=C_3$ of length $3$, respectively; now $p=3$ and $q=2$ or $3$, respectively. Here the basic transpositions in ${\cal T}_G$ can be represented by different colors; in fact, color representations of Cayley graphs are quite common and some authors use the term {\em Cayley color graph\/} to emphasize this idea (see \cite{CL96}). 
 
\begin{figure}[h!]
\begin{center}
\scalebox{0.80}{\includegraphics{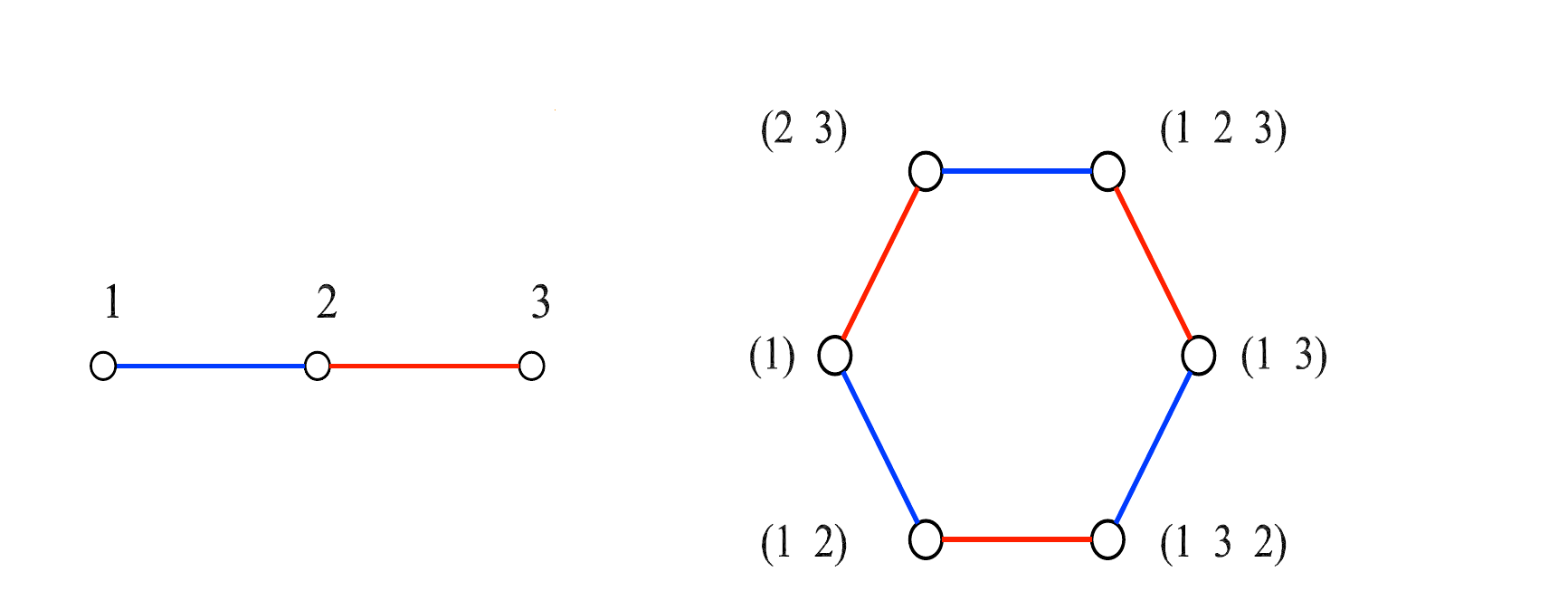}}
\caption{The basic graph $P_2$ and its Cayley color graph ${\cal G}(P_2)
={\cal G}(S_3,\{ \tau_{\{1,2\}},\tau_{\{2,3\}}\})$}. \label{P_2}
\end{center}
\end{figure}

\begin{figure}[h!]
\begin{center}
\scalebox{0.50}{\includegraphics{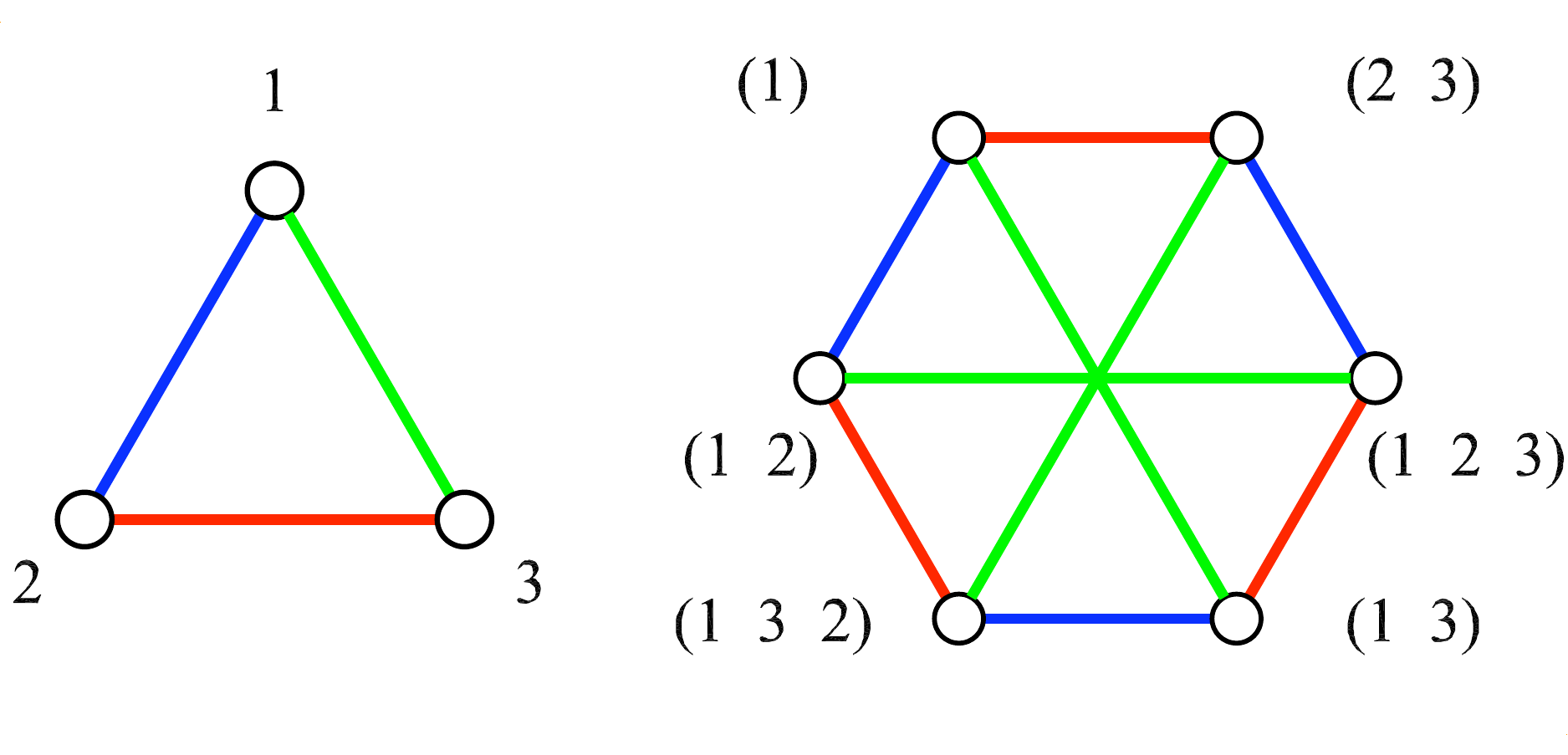}}
\caption{The basic graph $C_3$ and its Cayley color graph 
${\cal G}(C_3)={\cal G}(S_3,\{ \tau_{\{1,2\}},\tau_{\{1,3\}},\tau_{\{2,3\}}\})$}.
\label{C_3}
\end{center}
\end{figure}

The following lemma relates the Cayley graph of the path $P_n$ on $n+1$ vertices to the permutahedron $\Pi_{n}$ in $\mathbb{R}^{n+1}$. For $n=2$, the permutahedron is a hexagon (see Figure~\ref{P_2}).

\smallskip
\begin{lemma} \label{isopermu} 
Let $P_n$ denote the simple path with $n$ edges and $n+1$ vertices. Then the Cayley graph ${\cal G}(P_n)$ of $P_n$ and the $1$-skeleton of the permutahedron $\Pi_{n}$ are isomorphic.
\end{lemma}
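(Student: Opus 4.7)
The plan is to exhibit an explicit bijection between $S_{n+1}$ and the vertex set of $\Pi_n$, and to verify that the edge relation on each side translates into the edge relation on the other.

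First I identify the generating set. Since $P_n$ has vertices $\{1,\ldots,n+1\}$ and edges $\{i,i+1\}$ for $1\le i\le n$, the definition of $\mathcal{T}_{P_n}$ gives $\mathcal{T}_{P_n}=\{(i\; i+1) : 1\le i\le n\}$, that is, precisely the set of adjacent transpositions in $S_{n+1}$. Thus the generators used to build $\mathcal{G}(P_n)$ are exactly the generators used (implicitly) in the edge description of $\Pi_n$ recalled in the introduction.

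Next, define the map $\Phi\colon S_{n+1}\to V(\Pi_n)$ by $\Phi(\gamma):=(\gamma^{-1}(1),\gamma^{-1}(2),\ldots,\gamma^{-1}(n+1))$. Every permutation of the tuple $(1,2,\ldots,n+1)$ arises in this way for a unique $\gamma$, so $\Phi$ is a bijection onto $V(\Pi_n)$. The use of $\gamma^{-1}$ (rather than $\gamma$) is the convention that converts left multiplication in $S_{n+1}$ into a swap of coordinate positions in $\mathbb{R}^{n+1}$, which is what I need in the next step.

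Then I check that $\Phi$ preserves adjacency. If $\gamma_2=(i\;i+1)\gamma_1$, then $\gamma_2^{-1}=\gamma_1^{-1}(i\;i+1)$, so $\gamma_2^{-1}$ and $\gamma_1^{-1}$ agree except at $i$ and $i+1$, where their values are swapped. Hence $\Phi(\gamma_2)$ is obtained from $\Phi(\gamma_1)$ by swapping the $i$-th and $(i+1)$-st coordinates, which by the description of $\Pi_n$ in the introduction is exactly an edge of the permutahedron. Conversely, if two vertices of $\Pi_n$ are joined by an edge, their tuples differ by interchanging two coordinates at adjacent positions $i,i+1$, and the corresponding permutations then satisfy $\gamma_2=(i\;i+1)\gamma_1$, i.e.\ $\{\gamma_1,\gamma_2\}\in E(\mathcal{G}(P_n))$.

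There is no real obstacle here; the whole content of the lemma is that the generators $\mathcal{T}_{P_n}$ coincide with the adjacent transpositions of $S_{n+1}$ and that the Cayley graph definition of adjacency matches the standard coordinate-swap description of edges in $\Pi_n$. The only point requiring a moment's care is the use of $\gamma^{-1}$ in the labeling $\Phi$, which is needed to align the left-multiplication convention of Section~\ref{cgra} with the permutation-of-coordinates picture of $\Pi_n$.
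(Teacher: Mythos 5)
Your overall strategy---an explicit vertex bijection plus a direct check of adjacency in both directions---is sound, and it is more elementary than what the paper does (the paper states Lemma~\ref{isopermu} without proof and later, in Theorem~\ref{theo3}, derives the identification from Wythoff's construction and Coxeter group theory). However, the proof as written has a genuine error, and it sits exactly at the point you flag as ``the only point requiring a moment's care'': the insertion of $\gamma^{-1}$. Your map $\Phi(\gamma)=(\gamma^{-1}(1),\ldots,\gamma^{-1}(n+1))$ does convert left multiplication by $(i\;i+1)$ into a swap of the coordinates in \emph{positions} $i$ and $i+1$, but a swap of adjacent positions is not in general an edge of the convex polytope $\Pi_n$. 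The edges of $\Pi_n$ are the vertex pairs at Euclidean distance $\sqrt{2}$, i.e.\ the pairs of tuples obtained from one another by interchanging two entries whose \emph{values} differ by $1$, in whatever positions those values happen to sit. Concretely, for $n=2$ take $\gamma_1=(1\;2\;3)$ and $\gamma_2=(1\;2)\gamma_1=(2\;3)$; these are adjacent in ${\cal G}(P_2)$, but $\Phi(\gamma_1)=(3,1,2)$ and $\Phi(\gamma_2)=(1,3,2)$ lie at distance $2\sqrt{2}$ in the hexagon $\Pi_2$---a long diagonal, not an edge. Your converse direction fails for the same reason: $(2,1,3)$ and $(3,1,2)$ form an edge of $\Pi_2$ yet differ in positions $1$ and $3$.

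The fix is simply to drop the inverse. Under $\gamma\mapsto(\gamma(1),\ldots,\gamma(n+1))$, the relation $\gamma_2=(i\;i+1)\gamma_1$ means $\gamma_2(j)=(i\;i+1)(\gamma_1(j))$ for every $j$, so the tuple of $\gamma_2$ arises from that of $\gamma_1$ by exchanging the two entries equal to $i$ and $i+1$; this is precisely a geometric edge of $\Pi_n$, and conversely every edge of $\Pi_n$ arises this way. (The sentence in the introduction that you cite is deliberately silent about which identification of vertices with permutations, and which side of multiplication, is intended; your argument is a good illustration of why that choice matters and must be verified against the actual geometry rather than assumed.) With that one change your proof is correct.
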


\section{The Graphicahedron}
\label{hedron}

Let $G$ be a connected $(p,q)$-graph. In this section, we describe a partially ordered set ${\cal P}_G$, called the {\em graphicahedron}, and show in particular that ${\cal P}_G$ is an abstract polytope of rank $q$. The $i$-faces of ${\cal P}_G$ are given by the connected components of the Cayley graphs of the form ${\cal G}(H)$, where $H$ is a spanning subgraph of $G$ with $i$ edges. 

In defining the graphicahedron it is convenient to initially suppress the (smallest) face of rank $-1$ and concentrate on faces of ranks $0,\ldots,q$. The missing face of rank $-1$ will be appended at the end.

For $i\in I:=\{0,\ldots,q\}$ define
\begin{equation}
\label{facets}
C_i :=  \{ (K,\alpha) \mid K\subseteq E(G), |K| = i,\, \alpha\in S_p \} .
\end{equation} 
Then we have the following equivalence relation on $\bigcup _{i\in I} C_i$.

\begin{definition}\label{equiv} 
Two elements $(K,\alpha),(L,\beta)\in{\bigcup _{i\in I} C_i}$ are said to be equivalent, or 
$(K,\alpha)\sim(L,\beta)$ for short, if and only if $K=L$ and $T_{K}\alpha = T_{L}\beta$ (equality as right cosets in $S_p$).
\end{definition}
By a slight abuse of notation we use the same symbol for both, an element of ${\bigcup _{i\in I} C_i}$ and its equivalence class under $\sim$. 

\begin{definition}\label{order}
Let ${\cal P}_G := {(\bigcup _{i\in I} C_i)}\slash{\sim}$. When $(K,\alpha), (L,\beta) \in {\cal P}_G$, define $(K,\alpha) \leq (L,\beta)$ if and only if $K\subseteq L$ and 
$T_{K}\alpha \subseteq T_{L}\beta$.
\end{definition}

It is straightforward to check that $\leq$ is well-defined and indeed makes ${\cal P}_G$ a partially ordered set. As before, its {\em faces\/} are its elements, $(K,\alpha)$. In particular, ${\cal P}_G$ has a strictly monotone rank function given by
\begin{equation}
{\rm rank}(K,\alpha) := |K| \, .
\end{equation}
Hence $(K,\alpha)$ is an $i$-face of ${\cal P}_G$ if and only if $|K|=i$. In particular, each vertex ($0$-face) of ${\cal P}_G$ is of the form $(\emptyset,\alpha)$ with $\alpha\in S_p$ (and $\emptyset$ as first component). Thus ${\cal P}_G$ has $p!$ vertices. Moreover, there is just one $q$-face, $(E(G),\alpha)$ for any $\alpha$, which is incident with any other face of ${\cal P}_G$. (Recall here that we are still excluding $i=-1$.) 

It is helpful to relate the faces of ${\cal P}_G$ to the connected components of the Cayley graphs of spanning subgraphs of $G$. First recall that the subsets of $E(G)$ are in one-to-one correspondence with the spanning subgraphs of $G$. Thus we may think of a subset $K$ of $E(G)$ as a spanning subgraph, $\widetilde{K}$ (say), of $G$. Then the right coset $T_{K}\alpha$ of the subgroup $T_{K}=T_{\widetilde{K}}$ in $S_p$ involved in Definition~\ref{equiv} is just the connected component of $\alpha$ in the Cayley graph ${\cal G}(\widetilde{K})$, and equivalence of faces in ${\cal P}_G$ is just equality of connected components. In summary, we have

\begin{lemma}
The faces of ${\cal P}_G$ are in one-to-one correspondence with the connected components of the Cayley graphs of spanning subgraphs of $G$. Under this correspondence, if $\widetilde{K}$ denotes the spanning subgraph of $G$ with edge set $K$, then the face $(K,\alpha)$ corresponds to the connected component of $\alpha$ in ${\cal G}(\widetilde{K})$.
\end{lemma}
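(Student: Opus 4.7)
The plan is to exhibit the correspondence explicitly, check it is well-defined on equivalence classes, and then verify bijectivity by identifying a right coset $T_K\alpha$ with the connected component of $\alpha$ in the Cayley graph ${\cal G}(\widetilde{K})$. The central observation to exploit is that in any Cayley graph ${\cal G}(\Gamma,{\cal T})$, the connected component of a vertex $\gamma$ consists precisely of the elements $\tau_{i_r}\cdots\tau_{i_1}\gamma$ obtainable by left-multiplying $\gamma$ by words in the generators, i.e.\ the coset $\langle{\cal T}\rangle\gamma$. Applied to the subset ${\cal T}_K\subseteq{\cal T}_G$ with $\langle{\cal T}_K\rangle = T_K$, this says the component of $\alpha$ in ${\cal G}(\widetilde{K})$ is exactly $T_K\alpha$.

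Define the map $\Phi$ sending a representative $(K,\alpha)$ to the pair $(\widetilde{K},C)$, where $C$ is the connected component of $\alpha$ in ${\cal G}(\widetilde{K})$. To see $\Phi$ descends to equivalence classes, suppose $(K,\alpha)\sim(L,\beta)$. Then $K=L$, so the ambient Cayley graph is the same, and $T_K\alpha = T_L\beta$; by the observation above, these cosets are the connected components of $\alpha$ and $\beta$ in ${\cal G}(\widetilde{K})$, so the components coincide. Hence $\Phi$ is well-defined on ${\cal P}_G$.

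For injectivity, suppose $\Phi(K,\alpha) = \Phi(L,\beta)$. Then $\widetilde{K} = \widetilde{L}$, which (by the one-to-one correspondence between edge subsets of $G$ and spanning subgraphs) forces $K=L$; and the components of $\alpha$ and $\beta$ in ${\cal G}(\widetilde{K})$ agree, so $T_K\alpha = T_K\beta$, i.e.\ $(K,\alpha)\sim(L,\beta)$. For surjectivity, let $\widetilde{K}$ be any spanning subgraph of $G$ and $C$ any connected component of ${\cal G}(\widetilde{K})$; choose any $\alpha\in C$, and then $\Phi(K,\alpha) = (\widetilde{K},C)$ by the identification of $C$ with $T_K\alpha$.

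The argument is essentially a translation between two languages, and there is no serious obstacle; the only point requiring genuine care is the identification of $T_K\alpha$ with the connected component of $\alpha$ in ${\cal G}(\widetilde{K})$, which relies on the fact that the generating set ${\cal T}_K$ consists of involutions, so that reachability in the undirected Cayley graph coincides with membership in the subgroup-coset $T_K\alpha$. Once that identification is in place, the well-definedness, injectivity, and surjectivity reduce to unpacking Definitions~\ref{equiv} and \ref{order}.
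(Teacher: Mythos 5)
Your proof is correct and follows the same route the paper takes: the paper's entire justification is the observation, stated in the paragraph preceding the lemma, that the right coset $T_K\alpha$ is precisely the connected component of $\alpha$ in ${\cal G}(\widetilde{K})$ and that equivalence of faces is equality of components. You have simply made explicit the well-definedness and bijectivity checks that the paper leaves implicit.
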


It is important to point out that the partial order in ${\cal P}_G$ is not in general equivalent to inclusion of connected components of Cayley graphs, or, equivalently, inclusion of right cosets in $S_p$. Clearly, incidence in ${\cal P}_G$ does imply inclusion of cosets; in fact, by definition, if $(K,\alpha) \leq (L,\beta)$ in ${\cal P}_G$, then $T_{K}\alpha \subseteq T_{L}\beta$ in $S_p$. However, the converse is not true in general, since subgroups generated by transpositions often have quite different generating sets of transpositions; there may exist subsets $K$ and $L$, not related by inclusion, but nevertheless satisfying $T_{K}\alpha \subseteq T_{L}\beta$ for some $\alpha,\beta$. On the other hand, the following Remark describes an interesting special case when the converse does hold. 

\begin{remark}
\label{remark}
If $G$ is a tree, then the partial order in ${\cal P}_G$ is equivalent to inclusion of right cosets in $S_p$. In other words, in this case we have $(K,\alpha) \leq (L,\beta)$ in ${\cal P}_G$ if and only if $T_{K}\alpha \subseteq T_{L}\beta$ in $S_p$. 

Here the additional condition that $K\subseteq L$ is already implied by the condition that $T_{K}\alpha \subseteq T_{L}\beta$; that is, the former is not required in the definition of the partial order of ${\cal P}_G$. This follows from the fact that, for a tree $G$, any subgroup of $S_p$ of the form $T_K$ is isomorphic to a direct product of symmetric groups, one for each connected component of the spanning subgraph of $G$ with edge set $K$. In fact, if $T_{K}\alpha \subseteq T_{L}\beta$ in $S_p$, then $T_{L}\beta = T_{L}\alpha$ and hence also $T_{K}\subseteq T_{L}$; but then, in turn, the direct product structure of the subgroups $T_K$ and $T_L$ implies that $K\subseteq L$.
\end{remark}

Resuming our general discussion, let again $G$ be any connected $(p,q)$-graph. Next we investigate the flags of ${\cal P}_G$. First note that, if $(K,\alpha)$ and $(L,\beta)$ are two faces of ${\cal P}_G$ with $(K,\alpha) \leq (L,\beta)$, then $(L,\beta)=(L,\alpha)$ in ${\cal P}_G$, so in representing the latter face we may replace $\beta$ by $\alpha$. In fact, since $(K,\alpha) \leq (L,\beta)$, we have $\alpha\in T_{K}\alpha \subseteq T_{L}\beta$ and hence $T_{L}\alpha = T_{L}\beta$. It follows that any chain of mutually incident faces in ${\cal P}_G$ can be represented in such a way that all second components are the same; in fact, we can always use the second component of the smallest face in the chain.

In particular, a flag $\Phi$ of ${\cal P}_G$ can be described by only two parameters, namely a maximal nested family of subsets of $E(G)$, denoted by 
${\cal K}_{\Phi }:={\cal K}:=\{K_0,K_1,\ldots,K_q\}$, and a single element $\alpha\in S_p$; that is, 
\[ \Phi = ({\cal K},\alpha) := \{(K_0,\alpha),(K_1,\alpha),\ldots,(K_q,\alpha)\} . \]
By a {\em maximal nested\/} family of subsets of a given finite set we mean a flag in the Boolean lattice (power set) associated with this set. Note here that $K_i$ contains exactly $i$ edges, so, in particular, $K_{0}=\emptyset$ and $K_{q}=E(G)$. (Bear in mind that we are still ignoring the $(-1)$-face.)  Thus, as will become clear, a flag is parametrized by its vertex
(in a sense, $\alpha$) and a flag of the vertex-figure at this vertex (in a sense, ${\cal K}$).

Our next lemma is aimed at establishing the strong flag-connectedness of ${\cal P}_G$.

\begin{lemma}\label{families2} 
Let ${\cal K}=\{K_0,K_1,\ldots,K_q\}$ and ${\cal F}=\{F_0,F_1,\ldots,F_q\}$ be two maximal nested families of subsets of $E(G)$. Then there exists a sequence ${\cal K}={\cal K}_0,{\cal K}_1,\ldots,{\cal K}_{s-1},{\cal K}_s={\cal F}$ of maximal nested families of subsets of $E(G)$ such that \\
(a) ${\cal K}_j$ differs from ${\cal K}_{j+1}$ in exactly one element, for all $j =0,\ldots,s-1$;\\
(b) ${\cal K}\cap{\cal F}\subseteq{\cal K}_j$, for all $j=0,\ldots,s$.
\end{lemma}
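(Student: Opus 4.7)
The plan is to identify a maximal nested family of subsets of $E(G)$ with a linear ordering of the edges of $G$. A family $\mathcal{K}=\{K_0,K_1,\ldots,K_q\}$ corresponds to the ordering $e_{\sigma(1)},e_{\sigma(2)},\ldots,e_{\sigma(q)}$ determined by $\{e_{\sigma(i)}\}=K_i\setminus K_{i-1}$, and vice versa. Under this bijection, two maximal nested families differ in exactly one element if and only if the corresponding orderings differ by an adjacent transposition (swapping consecutive entries $\sigma(i)$ and $\sigma(i+1)$ changes only $K_i$, since $K_{i-1}$ and $K_{i+1}$ remain the same). Thus (a) is simply the statement that consecutive orderings in the sequence differ by an adjacent transposition.

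Next, I would decompose the problem along the common part $\mathcal{I}:=\mathcal{K}\cap\mathcal{F}$. Since $\mathcal{I}$ is a chain (being contained in the chain $\mathcal{K}$) and always contains $\emptyset$ and $E(G)$, I can list its elements as $\emptyset=K_{j_0}\subset K_{j_1}\subset\cdots\subset K_{j_t}=E(G)$. Within each interval $[j_{r-1},j_r]$, both $\mathcal{K}$ and $\mathcal{F}$ restrict to maximal chains in the sub-Boolean lattice $[K_{j_{r-1}},K_{j_r}]$, which is isomorphic to the Boolean lattice on the $(j_r-j_{r-1})$-element set $K_{j_r}\setminus K_{j_{r-1}}$. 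Under the ordering correspondence, these restrictions are two linear orderings of the same $(j_r-j_{r-1})$-element set.

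The key observation is then: if I only use adjacent transpositions at positions $i$ strictly between $j_{r-1}$ and $j_r$, the sets $K_{j_{r-1}}$ and $K_{j_r}$ (and indeed all the $K_{j_s}$ for other intervals) are untouched, so condition (b) is automatically preserved. So it suffices to prove, in each subinterval separately, that any two linear orderings of a finite set can be connected by a sequence of adjacent transpositions --- which is simply the familiar fact that the symmetric group is generated by adjacent transpositions. Concatenating the sequences from each subinterval yields the desired global sequence.

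The only real obstacle is ensuring that the moves inside one subinterval do not disturb the other subintervals or the common elements; this is handled cleanly by the decomposition, because an adjacent transposition at position $i$ only affects the level-$i$ set $K_i$ and nothing else. Thus the argument reduces to the trivial reduced case $\mathcal{I}=\{\emptyset,E(G)\}$ on each subinterval, and the invocation of generation of $S_{j_r-j_{r-1}}$ by adjacent transpositions completes the proof.
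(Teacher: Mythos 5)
Your proof is correct, but it takes a more self-contained route than the paper. The paper disposes of this lemma in one line: it identifies the Boolean lattice on $E(G)$ with the face-lattice of the $(q-1)$-simplex and invokes the known strong flag-connectedness of that polytope, so that ${\cal K}$ and ${\cal F}$, viewed as flags of the simplex, can be joined by successively adjacent flags all containing ${\cal K}\cap{\cal F}$. What you have written is, in effect, an elementary proof of that cited fact: the bijection between maximal chains in a Boolean lattice and linear orderings of the ground set, the observation that chain-adjacency corresponds to an adjacent transposition of the ordering (only the level-$i$ set changes when positions $i$ and $i{+}1$ are swapped), the decomposition of the problem along the common chain ${\cal K}\cap{\cal F}$ into independent sub-Boolean-lattices, and the reduction to generation of the symmetric group by adjacent transpositions. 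All steps check out, including the key point that a swap at a position $i$ strictly interior to a subinterval never disturbs the levels $j_0,\dots,j_t$ at which the common elements sit, so condition (b) is preserved throughout. The paper's appeal is shorter and leans on standard polytope theory; your argument costs a paragraph more but is fully explicit and does not presuppose the strong flag-connectedness of the simplex, which is a reasonable trade given that this lemma is itself a building block for proving flag-connectedness of ${\cal P}_G$.
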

\begin{proof}
One possible way to establish the lemma is to appeal to the strong flag-connectedness of the face-lattice of the $(q-1)$-simplex; the latter is isomorphic to the Boolean lattice on $E(G)=\{e_1,\ldots,e_q\}$. Viewed in this setting, the two families ${\cal K}$ and ${\cal F}$ are flags of the $(q-1)$-simplex and can be joined by a sequence of successively adjacent flags of the $(q-1)$-simplex all containing ${\cal K}\cap{\cal F}$. 
\end{proof}

Next we settle the strong flag-connectedness of ${\cal P}_G$. We will see that, in essence, the previous lemma corresponds to the strong flag-connectedness of the vertex-figures of ${\cal P}_G$.

\begin{lemma}\label{flagconn}
The poset ${\cal P}_G$ is strongly flag-connected. In other words, if $\Phi$ and $\Psi$ are two flags of ${\cal P}_G$, then there exists a sequence of successively adjacent flags $\Phi=\Phi_0,\Phi_1,\ldots,\Phi_{s-1},\Phi_s=\Psi$ of ${\cal P}_G$ such that $\Phi\cap\Psi\subseteq\Phi_i$ for all $i=0,\ldots,s$.
\end{lemma}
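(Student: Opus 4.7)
The plan is to split by whether $\Phi=(\mathcal{K},\alpha)$ and $\Psi=(\mathcal{F},\beta)$ share a common vertex, writing $\mathcal{K}=\{K_{0},\ldots,K_{q}\}$ and $\mathcal{F}=\{F_{0},\ldots,F_{q}\}$.

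In the easy case $\alpha=\beta$, Lemma~\ref{families2} does essentially all the work. It produces a sequence $\mathcal{K}=\mathcal{K}_{0},\ldots,\mathcal{K}_{s}=\mathcal{F}$ of maximal nested families of subsets of $E(G)$, with consecutive members differing in exactly one element at some rank $i_{j}$ with $0<i_{j}<q$ (the ranks $0$ and $q$ being forced), and each containing $\mathcal{K}\cap\mathcal{F}$. Setting $\Phi_{j}:=(\mathcal{K}_{j},\alpha)$ one checks immediately that $\Phi_{j}$ and $\Phi_{j+1}$ are $i_{j}$-adjacent flags of $\mathcal{P}_{G}$ and that every $\Phi_{j}$ contains $\Phi\cap\Psi=\{(K,\alpha):K\in\mathcal{K}\cap\mathcal{F}\}$, so this case is settled.

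Now assume $\alpha\neq\beta$. Let $K^{*}$ be the smallest member of the chain $\mathcal{K}\cap\mathcal{F}$ such that $\beta\alpha^{-1}\in T_{K^{*}}$; such a $K^{*}$ exists because $E(G)\in\mathcal{K}\cap\mathcal{F}$ and $T_{E(G)}=S_{p}$ by Lemma~\ref{propoconnected}. Put $i^{*}:=|K^{*}|$. Monotonicity of $T_{K}$ in $K$ together with the minimality of $K^{*}$ shows that $\Phi\cap\Psi=\{(K,\alpha):K\in\mathcal{K}\cap\mathcal{F},\,K\supseteq K^{*}\}$. Since the transpositions $\tau_{e}$ with $e\in K^{*}$ generate $T_{K^{*}}$, I write $\beta\alpha^{-1}=\tau_{h_{1}}\tau_{h_{2}}\cdots\tau_{h_{r}}$ with all $h_{j}\in K^{*}$, and set $\alpha_{0}:=\alpha$ and $\alpha_{j}:=\tau_{h_{j}}\alpha_{j-1}$, so that $\alpha_{r}=\beta$. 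First apply the $\alpha=\beta$ argument to move from $\Phi$ to $(\mathcal{F},\alpha)$. Then, for $j=1,\ldots,r$, perform a macro-step that (i) invokes Lemma~\ref{families2} to go from $\mathcal{F}$ to some maximal nested family $\mathcal{H}_{j}$ agreeing with $\mathcal{F}$ at all ranks $\geq i^{*}$ and having rank-$1$ element $\{h_{j}\}$, (ii) performs a $0$-adjacency turning $\alpha_{j-1}$ into $\alpha_{j}=\tau_{h_{j}}\alpha_{j-1}$, and (iii) invokes Lemma~\ref{families2} again to return from $\mathcal{H}_{j}$ to $\mathcal{F}$. After $r$ macro-steps the flag is $(\mathcal{F},\alpha_{r})=(\mathcal{F},\beta)=\Psi$.

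The crux, and the main obstacle, is the bookkeeping required to check that $\Phi\cap\Psi$ remains inside every intermediate flag. For the family coordinate: the initial reduction preserves $\mathcal{K}\cap\mathcal{F}$ by Lemma~\ref{families2} directly, and within each macro-step the sequence of families produced by Lemma~\ref{families2} between $\mathcal{F}$ and $\mathcal{H}_{j}$ can only change the family at ranks in $\{1,\ldots,i^{*}-1\}$, because every intermediate family must contain $\mathcal{F}\cap\mathcal{H}_{j}\supseteq\{F_{r}:r\geq i^{*}\}\cup\{\emptyset\}$; hence every intermediate family contains every $K\in\mathcal{K}\cap\mathcal{F}$ with $K\supseteq K^{*}$. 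For the vertex coordinate: at every stage the running second coordinate $\alpha_{j}$ satisfies $\alpha_{j}\alpha^{-1}\in T_{K^{*}}$, being a product of transpositions $\tau_{h_{i}}$ with $h_{i}\in K^{*}$; consequently $\alpha_{j}\alpha^{-1}\in T_{K}$ for every $K\supseteq K^{*}$, giving $(K,\alpha_{j})=(K,\alpha)$ in $\mathcal{P}_{G}$. Combining these two observations, every face of $\Phi\cap\Psi$ indeed survives every move.
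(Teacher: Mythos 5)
Your argument is correct and follows essentially the same route as the paper's proof: the shared-vertex case is delegated to Lemma~\ref{families2} (i.e.\ to the strong flag-connectedness of the vertex-figure), and the general case is reduced to it by factoring $\beta\alpha^{-1}$ into transpositions taken from the smallest common face $K^{*}$ (the paper's $(K_m,\alpha)$) and performing one $0$-adjacency at a time through a flag whose $1$-face is $(\{h_j\},\cdot)$ --- the paper packages these steps as an induction on the word length, but the mechanism is the same. The only slip is notational: with $\alpha_j=\tau_{h_j}\alpha_{j-1}$ you get $\alpha_r=\tau_{h_r}\cdots\tau_{h_1}\alpha$, so you should factor $\beta\alpha^{-1}=\tau_{h_r}\cdots\tau_{h_1}$ (or reverse the labelling); this is harmless.
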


\begin{proof} 
Let $\Phi=\{(K_0,\alpha),\ldots,(K_q,\alpha)\}$ and $\Psi=\{(F_0,\beta),\ldots,(F_q,\beta)\}$ be two flags of ${\cal P}_G$, where $\alpha,\beta\in S_p$ and ${\cal K}:=\{K_0,\ldots,K_{q-1}\}$ and ${\cal F}:=\{F_0,\ldots,F_{q-1}\}$ are maximal nested families of subsets of $E(G)$, respectively; that is, $\Phi=({\cal K},\alpha)$ and $\Psi=({\cal F},\beta)$. Let $J$ denote the set of suffices $j$ with $(K_{j},\alpha)=(F_{j},\beta)$; then $J$ consists of the positions $j$ where $\Phi$ and $\Psi$ agree, and $J\neq\emptyset$ since $q\in J$. Let $m$ denote the smallest suffix in $J$.

First we settle the case that $\Phi$ and $\Psi$ share a $0$-face (vertex), or equivalently, that $\alpha=\beta$, or that $m=0$. If $\alpha=\beta$, we can appeal to Lemma~\ref{families2} to obtain a sequence ${\cal K}={\cal K}_0,{\cal K}_1,\ldots,{\cal K}_{t-1}, {\cal K}_t={\cal F}$ of maximal nested families of subsets of $E(G)$, such that any two consecutive families differ in exactly one element and all families contain ${\cal K}\cap{\cal F}$. Now define $\Phi_{i}:=({\cal K}_i,\alpha)$ for $i=0,\ldots,t$. Then the sequence of flags $\Phi_0,\Phi_1,\ldots,\Phi_{s-1},\Phi_s$ of ${\cal P}_G$ has all the required properties. Thus the vertex-figure of ${\cal P}_G$ at its vertex $(K_0,\alpha)=(F_0,\beta)$ (in fact, at any of its vertices) is strongly flag-connected. 

The proof of the general case rests on the connectedness of $G$ (recall that the latter had been assumed from the start). Thus the underlying Cayley graph ${\cal G}(G)$ is connected and the transpositions in ${\cal T}_G$ generate the full symmetric group $S_p$ (that is, $T_G=S_p$). Then, since $(K_m,\alpha)=(F_m,\beta)$ in ${\cal P}_G$ (recall the definition of $m$), there exist edges $f_1,\ldots,f_s$ in $K_{m}\,(=F_{m})$ and corresponding transpositions $\tau_{f_1},\ldots,\tau_{f_s}$ in ${\cal T}_{K_{m}}$ such that $\beta=\tau_{f_s}\cdots\tau_{f_2}\tau_{f_1}\alpha$, provided $m\geq 1$. Here it is convenient to also allow $m=0$, meaning that $\beta=\alpha$ and $s=0$ (that is, no transpositions occur). If $m=1$, then $\Phi$ and $\Psi$ agree in their $1$-face but not their $0$-face; since $K_1$ consists of a single edge, we must have $s=1$, $K_{1}=\{f_1\}$, and $\beta=\tau_{f_1}\alpha$. Recall from Lemma~\ref{propoconnected} that $T_{K_m}=S_p$ if $m=q$.

We now proceed by induction on $s$. The case $s=0$ (that is, $\beta=\alpha$) has already been settled. Now let $s\geq 1$ and hence also $m\geq 1$ (possibly $m=q$). The key inductive step is to properly join the initial flag $\Phi=({\cal K},\alpha)$ to a suitable new flag $\Lambda=({\cal L},\gamma)$ associated with a maximal nested family ${\cal L}$ of subsets of $E(G)$ and the element $\gamma:=\tau_{f_1}\alpha$ of $S_p$. At the end, since $\beta=\tau_{f_s}\cdots\tau_{f_2}\gamma$, the inductive hypothesis for $s-1$, applied to $\Lambda$ and $\Psi$, will yield a sequence of flags joining $\Lambda$ to $\Psi$. Finally, then, the two sequences of flags connecting $\Phi$ to $\Lambda$ and $\Lambda$ to $\Psi$, respectively, can be concatenated to obtain a sequence of flags connecting $\Phi$ to $\Psi$.

It remains to construct $\Lambda$. To begin with, observe that in ${\cal P}_G$ we have
\[ (K_0,\alpha),(F_0,\tau_{f_{1}}\alpha)\,\leq\, (\{f_1\},\alpha)\,\leq\, (K_m,\alpha)
= (K_m,\beta), \] 
since both $\tau_{f_{1}}\alpha\in\langle\tau_{f_{1}}\rangle\alpha$ and 
$\tau_{f_{1}}\alpha = (\tau_{f_s}\cdots\tau_{f_2})^{-1}\beta \in T_{{K_m}}\beta = T_{{K_m}}\alpha$, respectively. Now we can proceed in three steps as follows. First, by our observation, we may choose a flag $\Lambda'$ (say) which contains $(K_0,\alpha)$ as its $0$-face and $(\{f_1\},\alpha)$ as its $1$-face and also includes the entire subset $\Phi\cap\Psi$; this is possible even if $m=1$. Then, since $\Phi$ and $\Lambda'$ share a $0$-face, we can join them by a sequence of successively adjacent flags, all containing $\Phi\cap\Lambda'$ and therefore also $\Phi\cap\Psi$. Second, substitute the $0$-face of $\Lambda'$ by $(F_0,\tau_{f_{1}}\alpha)$ to obtain a new flag $\Lambda''$ $0$-adjacent to $\Lambda'$, and then append $\Lambda''$ to the existing flag sequence. Third, choose any maximal nested family ${\cal L}=\{L_0,\ldots,L_q\}$ of sets of edges of $G$ such that $L_{1}=\{f_1\}$ and $L_{j}=K_{j}=F_{j}$ for all $j\in J$; this is possible, since $f_{1}\in K_m$. 
Define
\[ \Lambda := ({\cal L},\tau_{f_{1}}\alpha)
=\{(L_0,\tau_{f_{1}}\alpha),\ldots,(L_q,\tau_{f_{1}}\alpha)\} .\]
Then $\Lambda$ shares a vertex with $\Lambda''$ and contains $\Phi\cap\Psi$; note for the latter that $f_1\in K_j=L_j$ for all $j\in J$. Finally, since $\Lambda''$ and $\Lambda$ share a common $0$-face, we can further extend the already existing flag sequence by a sequence of successively adjacent flags, which all contain $\Lambda''\cap\Lambda$ and hence also $\Phi\cap\Psi$, and which joins $\Lambda''$ to $\Lambda$. We have now joined 
$\Phi$ to $\Lambda$, and we are done. 
\end{proof}

We also need to prove that ${\cal P}_G$ satisfies the diamond condition. 

\begin{lemma}
\label{diamond}
For each $i=1,\ldots,n-1$, given an $(i-1)$-face $(K,\alpha)$ and an $(i+1)$-face $(L,\beta)$ of ${\cal P}_G$ such that $(K,\alpha)\le(L,\beta)$, there exist exactly two $i$-faces $(J,\gamma)$ of ${\cal P}_G$ such that $(K,\alpha)\le (J,\gamma) \le(L,\beta)$. Similarly, given a $1$-face $(L,\beta)$ of ${\cal P}_G$, there exist exactly two $0$-faces $(J,\gamma)$ of ${\cal P}_G$ such that $(J,\gamma) \le(L,\beta)$.
\end{lemma}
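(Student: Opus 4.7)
The strategy is to unpack Definitions~\ref{equiv} and \ref{order} and reduce the question to a small counting argument. Recall the observation made just before Lemma~\ref{families2}: if $(K,\alpha)\leq(L,\beta)$ in $\mathcal{P}_G$, then $(L,\beta)=(L,\alpha)$. Applying this to the given pair, I would rewrite the $(i+1)$-face as $(L,\alpha)$ and then look for all equivalence classes $(J,\gamma)$ satisfying $|J|=i$, $K\subseteq J\subseteq L$, and $T_{K}\alpha\subseteq T_{J}\gamma\subseteq T_{L}\alpha$.

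The count on $J$ is immediate: since $|L\setminus K|=2$, say $L\setminus K=\{e',e''\}$, the only sets of size $i$ with $K\subseteq J\subseteq L$ are $J':=K\cup\{e'\}$ and $J'':=K\cup\{e''\}$. For either choice, the requirement $T_{K}\alpha\subseteq T_{J}\gamma$ already forces $\alpha\in T_{J}\gamma$, so $T_{J}\gamma=T_{J}\alpha$; meanwhile $T_{J}\alpha\subseteq T_{L}\alpha$ is automatic from $J\subseteq L$ (which gives $T_{J}\subseteq T_{L}$). Hence each of the two choices of $J$ produces exactly one sandwiched $i$-face, normalized as $(J,\alpha)$, and the two resulting faces $(J',\alpha)$ and $(J'',\alpha)$ are distinct because their first components differ.

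For the second assertion I would handle the diamond below a $1$-face directly. Writing $L=\{e\}$, we have $T_{L}=\{1,\tau_{e}\}$ and $T_{L}\beta=\{\beta,\tau_{e}\beta\}$. Any $0$-face $(\emptyset,\gamma)$ satisfies $(\emptyset,\gamma)\leq(L,\beta)$ if and only if $\{\gamma\}=T_{\emptyset}\gamma\subseteq T_{L}\beta$, that is, $\gamma\in\{\beta,\tau_{e}\beta\}$. These two group elements represent distinct $0$-faces, since $\tau_{e}\beta\neq\beta$.

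I do not anticipate a serious obstacle; the argument is essentially bookkeeping. The one substantive point is the normalization $(L,\beta)=(L,\alpha)$, which is what pins down $\gamma$ uniquely once $J$ has been chosen. Without this reduction one might fear that sandwiched $i$-faces could occur with genuinely different second components, but the coset inclusion $\alpha\in T_{K}\alpha\subseteq T_{J}\gamma$ collapses this freedom and confines the count to the two possible sets $J$.
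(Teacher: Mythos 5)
Your proof is correct and follows essentially the same route as the paper's: normalize the second components to $\alpha$ using the coset inclusion, count the two intermediate sets $J$ with $K\subseteq J\subseteq L$, and handle the $0$-face case by the explicit coset $\{\beta,\tau_e\beta\}$. Your write-up merely makes the normalization step ("we may assume $\gamma=\alpha=\beta$") more explicit than the paper does.
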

\begin{proof}
For the first part, suppose $(K,\alpha)\le (J,\gamma) \le(L,\beta)$ in ${\cal P}_G$. Then, as explained earlier, we may assume that $\gamma=\alpha=\beta$. Moreover, $K\subset J\subset L$ and $K,J,L$ contain exactly $i-1$, $i$ or $i+1$ elements, respectively. This leaves precisely two possible choices for $J$. Conversely, any such choice determines an $i$-face $(J,\alpha)$ such that $(K,\alpha)\le (J,\alpha) \le(L,\alpha)$.

For the second part, suppose $(J,\gamma) \le(L,\beta)$ in ${\cal P}_G$. Now $J=\emptyset$ and $L$ has only one element. If $L=\{e\}$ (say), then necessarily 
$\gamma \in T_{L}\beta=\langle\tau_e\rangle\beta = \{\beta,\tau_e\beta\}$. This leaves only two choices for $\gamma$, namely $\beta$ or $\tau_e\beta$. Conversely, either of these choices yields a $0$-face $(J,\gamma)$ with $(J,\gamma) \le(L,\beta)$.
\end{proof}

Finally, then, we need to append a face of rank $-1$ as smallest face of ${\cal P}_G$. In particular, from now on, ${\cal P}_G$ will denote the extended partially ordered set.

The symmetric group $S_p$ acts faithfully on ${\cal P}_G$ as a group of polytope automorphisms. In fact, every $\gamma\in S_p$ determines an element of $\Gamma({\cal P}_G)$, again denoted by $\gamma$, given by 
\begin{equation}\label{aut}
(K,\alpha)\rightarrow (K,\alpha\gamma) \qquad (K\subseteq E(G),\, \alpha\in S_p).
\end{equation}
(Note here that $\gamma$ is realized by right multiplication on the second component, not by left multiplication. Left multiplication would not in general lead to an incidence preserving mapping of ${\cal P}_G$.) It is straightforward that the group $S_p$ of all such polytope automorphisms acts simply transitively on the vertices of 
${\cal P}_G$. In general, however, $S_p$ is only a proper subgroup of the full automorphism group of ${\cal P}_G$. The precise relationship is clarified in Theorem~\ref{graphsaut}.

Call an abstract $q$-polytope {\em simple\/} if all its vertex-figures are isomorphic to the $(q-1)$-simplex. In ${\cal P}_G$, the flags which contain a given vertex $(\emptyset,\alpha)$ are all of the form $({\cal K},\alpha)$, where ${\cal K}$ is a maximal nested family of subsets of $E(G)$. It follows that ${\cal P}_G$ is simple. Moreover, if $(K,\alpha)$ is a facet (that is, $(q-1)$-face) of ${\cal P}_G$ containing $(\emptyset,\alpha)$, then $|K|=q-1$ and $K$ is obtained from $E(G)$ by removing a single element. This gives exactly $q$ choices for $K$ and hence $q$ choices of facets containing 
$(\emptyset ,\alpha)$. Therefore, since $S_p$ acts vertex-transitively on ${\cal P}_G$, there are exactly $q$ types of facets in ${\cal P}_G$, each type occurring at every vertex of ${\cal P}_G$. Note here that it can happen that two distinct facets $(K,\alpha)$ and $(L,\alpha)$ containing a given vertex $(\emptyset,\alpha)$ determine the same coset in $S_p$, that is, $T_K\alpha=T_L\alpha$ (the facets still are distinct because $K$ and $L$ are distinct).

In summary, we have established the following theorem. 

\begin{theorem} \label{theo1} 
Let $G$ be a connected $(p,q)$-graph. Then ${\cal P}_G$ is a simple abstract polytope of rank $q$ with $p!$ vertices and $p!q!$ flags. Moreover, the symmetric group $S_p$ acts simply transitively on the vertices of ${\cal P}_G$.
\end {theorem}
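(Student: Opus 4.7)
The plan is to bundle together what the preceding lemmas already give me and then do a bit of counting. Concretely, I would verify the axioms for an abstract polytope, count vertices and flags, check simpleness, and finish with the $S_p$-action.

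First I would handle the structural axioms. The assignment $(K,\alpha)\mapsto|K|$ is a strictly monotone rank function with values in $\{0,1,\ldots,q\}$; after formally adjoining $F_{-1}$, the range becomes $\{-1,\ldots,q\}$. Uniqueness of the top face $F_q$ is the one point that uses connectedness: by Lemma~\ref{propoconnected}, $T_{E(G)}=S_p$, so $T_{E(G)}\alpha=S_p$ for every $\alpha$, and all candidates $(E(G),\alpha)$ collapse to a single equivalence class. The diamond condition is Lemma~\ref{diamond}, and strong flag-connectedness is Lemma~\ref{flagconn}. Each flag is of the form $\Phi=({\cal K},\alpha)$ with ${\cal K}=\{K_0,\ldots,K_q\}$ a maximal nested family of subsets of $E(G)$, and together with $F_{-1}$ it has exactly $q+2$ faces; this establishes the abstract polytope of rank $q$.

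Next I would count. Since $T_\emptyset$ is trivial, distinct $\alpha$ yield distinct vertices $(\emptyset,\alpha)$, producing $p!$ vertices. For the flag count I would show that the parametrization $({\cal K},\alpha)\leftrightarrow\Phi$ is a bijection: the $0$-face recovers $\alpha$, and then each $j$-face $(K_j,\alpha)$ recovers $K_j$ because, with the second component fixed, the equivalence relation of Definition~\ref{equiv} forces the first components to agree. Maximal nested families of subsets of a $q$-element set correspond to linear orderings of the set, contributing $q!$, so altogether there are $p!\,q!$ flags.

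For simpleness, the poset of faces incident with a fixed vertex $(\emptyset,\alpha)$ is the collection of $(K,\alpha)$ with $K\subseteq E(G)$, ordered by inclusion of $K$; this is the Boolean lattice on $E(G)$, which is the face lattice of the $(q-1)$-simplex, so every vertex-figure is a $(q-1)$-simplex and ${\cal P}_G$ is simple. Finally, the map $\gamma:(K,\alpha)\mapsto(K,\alpha\gamma)$ respects both Definitions~\ref{equiv} and~\ref{order} (right-multiplying a right coset by $\gamma$ preserves containment), and on vertices it is right multiplication by $\gamma$ in $S_p$, which is simply transitive. The only real obstacle is the bookkeeping around the flag parametrization and the uniqueness of $F_q$; both reduce to the boundary cases $K=\emptyset$ and $K=E(G)$, handled respectively by the triviality of $T_\emptyset$ and by Lemma~\ref{propoconnected}.
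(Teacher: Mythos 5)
Your proposal is correct and follows essentially the same route as the paper, which likewise assembles Theorem~\ref{theo1} from the rank function, Lemma~\ref{flagconn}, Lemma~\ref{diamond}, the flag parametrization $({\cal K},\alpha)$, and the right-multiplication action of $S_p$. Your explicit remarks that connectedness (via Lemma~\ref{propoconnected}) is what collapses all $(E(G),\alpha)$ to a single top face, and that triviality of $T_\emptyset$ makes the flag parametrization a bijection, are points the paper leaves implicit but are exactly right.
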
 

Note that ${\cal P}_G$ is a $0$- or $1$-polytope, respectively, if $G$ is the trivial graph (with a single vertex and no edge) or a graph with a single edge (and two vertices). If $G$ has exactly two edges, then ${\cal P}_G$ is a hexagon (see also Theorem~\ref{theo3}).

The polytope ${\cal P}_G$ of Theorem~\ref{theo1} is called the {\em graphicahedron associated with\/} $G$, or simply the {\em $G$-graphicahedron\/}. The following theorem lies at the heart of our construction and is largely responsible for the use of the term ``graphicahedron".

\begin{theorem} \label{theo2}
Let $G$ be a connected $(p,q)$-graph. Then the $1$-skeleton of ${\cal P}_G$ is isomorphic to the Cayley graph ${\cal G}(G)$ associated with $G$. 
\end{theorem}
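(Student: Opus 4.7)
The plan is to exhibit an explicit bijection between the vertex sets of the two graphs and check that it preserves and reflects the edge relation.

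First, I would define the map $\phi$ sending the vertex $(\emptyset,\alpha)$ of ${\cal P}_G$ to the vertex $\alpha$ of ${\cal G}(G)$. Since every $0$-face of ${\cal P}_G$ has first component $\emptyset$ (the unique subset of $E(G)$ of cardinality $0$) and distinct choices of $\alpha\in S_p$ yield distinct $0$-faces (because $T_\emptyset=\{1\}$, so the equivalence relation of Definition~\ref{equiv} is trivial at this rank), the map $\phi$ is a bijection between $V({\cal P}_G)_0$ and $V({\cal G}(G))=S_p$, matching the count $p!$ from Theorem~\ref{theo1}.

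Next I would analyze the edges. An edge of ${\cal P}_G$ is a $1$-face, necessarily of the form $(\{e\},\gamma)$ for some $e\in E(G)$ and $\gamma\in S_p$. By the second half of Lemma~\ref{diamond}, this $1$-face is incident with exactly two $0$-faces, and the argument there identifies them explicitly as $(\emptyset,\gamma)$ and $(\emptyset,\tau_e\gamma)$, since $T_{\{e\}}\gamma=\langle\tau_e\rangle\gamma=\{\gamma,\tau_e\gamma\}$. Therefore two vertices $(\emptyset,\alpha)$ and $(\emptyset,\beta)$ of ${\cal P}_G$ are joined by an edge of ${\cal P}_G$ if and only if there exists $e\in E(G)$ and $\gamma\in S_p$ with $\{\alpha,\beta\}=\{\gamma,\tau_e\gamma\}$, equivalently $\beta=\tau_e\alpha$ for some $e\in E(G)$. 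This is precisely the adjacency condition defining ${\cal G}(G)$, so $\phi$ carries the edge set of the $1$-skeleton of ${\cal P}_G$ bijectively onto the edge set of ${\cal G}(G)$.

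There is a small subtlety to address: one should verify that distinct edges $(\{e\},\gamma)$ and $(\{e'\},\gamma')$ of ${\cal P}_G$ that happen to connect the same pair of $0$-faces in the $1$-skeleton do not occur, so that the $1$-skeleton really is a simple graph matching ${\cal G}(G)$ (which has no multiple edges since $G$ does not). If two such $1$-faces share both endpoints, then from the characterization above we must have $e=e'$ (because the edge $e\in E(G)$ is recovered from the endpoints as the unique transposition $\tau_e$ satisfying $\beta=\tau_e\alpha$, using that $G$ has no multi-edges) and then $T_{\{e\}}\gamma=T_{\{e'\}}\gamma'$, whence the two $1$-faces coincide in ${\cal P}_G$ by Definition~\ref{equiv}. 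The only mild obstacle is this bookkeeping about when two symbols $(\{e\},\gamma)$ and $(\{e\},\gamma')$ represent the same $1$-face; it is settled directly from the equivalence relation, so the proof is essentially a direct translation between the coset description of faces and the defining adjacency of the Cayley graph.
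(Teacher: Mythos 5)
Your proposal is correct and follows essentially the same route as the paper: identify $(\emptyset,\alpha)$ with $\alpha$ and observe that two vertices share a $1$-face $(\{e\},\gamma)$ exactly when $\langle\tau_e\rangle\gamma=\{\alpha,\beta\}$, i.e.\ $\beta=\tau_e\alpha$. Your extra check that no two distinct $1$-faces join the same pair of vertices is a sound (and slightly more careful) addition that the paper leaves implicit.
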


\begin{proof}
Clearly, the vertices of ${\cal P}_G$ can be identified with the elements of $S_p$, via $(\emptyset,\alpha) \rightarrow \alpha$. Moreover, in ${\cal P}_G$, two vertices $(\emptyset,\alpha)$ and $(\emptyset,\beta)$ are incident with a common $1$-face $(\{e\},\gamma)$ (say) if and only if $\langle\tau_e\rangle\gamma = \{\alpha,\beta\}$; that is, if and only if $\beta=\tau_{e}\alpha$. Thus, adjacency in the $1$-skeleton of ${\cal P}_G$ corresponds precisely to adjacency in the Cayley graph ${\cal G}(G)$.
\end{proof}

Cayley graphs are a fruitful source for the construction of polytopes. For two quite different applications of Cayley graph techniques to polytopes see also Monson-Weiss~\cite{mow} and Pellicer~\cite{pel}.

\section{The group of the graphicahedron}
\label{group}

The automorphism group $\Gamma({\cal P}_{G})$ of the graphicahedron ${\cal P}_G$ will usually be larger than the underlying symmetric group $S_p$. In fact, this happens precisely when $G$ has non-trivial graph automorphisms (symmetries). Let $\Gamma(G)$ denote the group of graph automorphisms of $G$. Note that, if $G$ is connected, then $\Gamma(G)$ is faithfully represented by its actions on the vertices of $G$, and on the edges of $G$ provided $G$ has at least two edges or no edge. 

\begin{theorem}\label{graphsaut}
Let $G$ be a connected $(p,q)$-graph, and let $q\neq 1$. Then $\Gamma({\cal P}_{G})=S_{p}\ltimes\Gamma(G)$.
\end{theorem}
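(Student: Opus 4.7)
The plan is to prove the two inclusions $S_p\ltimes\Gamma(G)\subseteq\Gamma({\cal P}_G)$ and $\Gamma({\cal P}_G)\subseteq S_p\ltimes\Gamma(G)$ in turn. For the first, $S_p$ already embeds in $\Gamma({\cal P}_G)$ via~(\ref{aut}); given $\sigma\in\Gamma(G)\subseteq S_p$, I would define
\[
\widehat\sigma(K,\alpha):=(\sigma(K),\,\sigma\alpha\sigma^{-1}),\qquad \sigma(K):=\{\sigma(e)\mid e\in K\},
\]
and verify that this is a well-defined order-preserving bijection of ${\cal P}_G$ using the identity $T_{\sigma(K)}=\sigma T_K\sigma^{-1}$. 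A short calculation shows that, for every $\gamma\in S_p$, the conjugate $\widehat\sigma\circ\gamma\circ\widehat\sigma^{-1}\in\Gamma({\cal P}_G)$ coincides with right multiplication by $\sigma\gamma\sigma^{-1}$, so $S_p$ is normalised by the image of $\Gamma(G)$. Combined with $S_p\cap\Gamma(G)=\{e\}$ inside $\Gamma({\cal P}_G)$ (which follows from $Z(S_p)=\{e\}$ for $p\geq 3$, with the residual cases $p\leq 2$ forcing $G$ to be trivial since $q\neq 1$), this exhibits $S_p\ltimes\Gamma(G)$ as a subgroup of $\Gamma({\cal P}_G)$.

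For the reverse inclusion, let $\phi\in\Gamma({\cal P}_G)$. By the vertex-transitivity of $S_p$ (Theorem~\ref{theo1}), I may compose $\phi$ with an element of $S_p$ and assume $\phi$ fixes $v_0:=(\emptyset,e)$. Since ${\cal P}_G$ is simple, the $q$ edges through $v_0$ are naturally labelled by $E(G)$ via $(\{f\},e)\leftrightarrow f$, so $\phi$ induces a permutation $\pi\in S_{E(G)}$, and indeed $\phi((K,e))=(\pi(K),e)$ for every $K\subseteq E(G)$. Two constraints on $\pi$ drop out at once: the $2$-face $(\{f_1,f_2\},e)$ is a hexagon when $f_1,f_2$ share a vertex in $G$ and a square otherwise, so $\pi$ preserves the relation ``share a vertex'' on $E(G)$; and $|T_{\pi(K)}|=|T_K|$ for every $K\subseteq E(G)$, because the faces $(K,e)$ and $(\pi(K),e)$ have the same number of vertices.

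The central step is to upgrade $\pi$ to a graph automorphism $\sigma\in\Gamma(G)$. For $v\in V(G)$ with $d(v)\geq 2$, consider $E_v:=\{f\in E(G)\mid v\in f\}$; its elements pairwise share $v$, so every two edges of $\pi(E_v)$ share a vertex in $G$. A simple graph in which all pairs of edges are incident is either a star or a triangle, and the triangle case is excluded because a triangle has $3$ edges with $|T|=6$, whereas $|T_{\pi(E_v)}|=(d(v)+1)!$ would then force $d(v)=2$ and $|\pi(E_v)|=2$ simultaneously. Hence $\widetilde{\pi(E_v)}$ is a star centred at a unique vertex $v'$, and I set $\sigma(v):=v'$. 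For a leaf $v$ with unique neighbour $w$, the hypothesis $q\neq 1$ gives $d(w)\geq 2$, so $\sigma(w)$ is already defined; I then set $\sigma(v)$ to be the endpoint of $\pi(\{v,w\})$ other than $\sigma(w)$. A routine case analysis shows that $\sigma$ is a bijection of $V(G)$ with $\pi(\{u,v\})=\{\sigma(u),\sigma(v)\}$ for every edge $\{u,v\}$, so $\sigma\in\Gamma(G)$. To conclude, $\widehat\sigma^{-1}\phi$ fixes $v_0$ and acts trivially on the edges through $v_0$; by simplicity of ${\cal P}_G$ it then fixes every face through $v_0$, hence a whole flag, and by the diamond condition together with strong flag-connectedness (Lemma~\ref{flagconn}) any automorphism of an abstract polytope fixing a flag must be the identity. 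Therefore $\widehat\sigma^{-1}\phi=\mathrm{id}$ and $\phi=\widehat\sigma$.

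The main obstacle is the star argument above, namely ensuring that the permutation $\pi$ of $E(G)$ is genuinely induced by a graph automorphism rather than merely by a line-graph automorphism. For graphs such as $G=K_4$ one has $|\mathrm{Aut}(L(G))|>|\Gamma(G)|$, and it is precisely the polytope invariant $|T_{\pi(K)}|=|T_K|$ on small subsets $K$ (which distinguishes stars from triangles) that rules out those extraneous line-graph symmetries.
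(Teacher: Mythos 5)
Your proposal is correct and follows essentially the same route as the paper's proof: embed $S_p$ and $\Gamma(G)$ as subgroups of $\Gamma({\cal P}_G)$ with trivial intersection and $S_p$ normal, reduce an arbitrary automorphism to one fixing a base vertex, read off an edge permutation from the simplicial vertex-figure, and use the hexagon/square dichotomy of $2$-faces together with the vertex count $|T_K|$ of a face $(K,\alpha)$ to show that this edge permutation comes from a graph automorphism. The only cosmetic difference is that you exclude the star-versus-triangle ambiguity by applying the invariant $|T_{\pi(K)}|=|T_K|$ to the whole edge-star $E_v$ at once, whereas the paper applies the same invariant to triples of concurrent edges via $3$-faces; both settle the line-graph issue in the same way.
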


\begin{proof}
First we show that $\Gamma({\cal P}_G)$ contains a subgroup isomorphic to a semi-direct product $S_{p}\ltimes\Gamma(G)$. We already know that $S_p$ can be viewed as a subgroup of $\Gamma({\cal P}_{G})$ acting by right multiplication on the second component of the faces of ${\cal P}_G$. 

Now consider $\Gamma(G)$. Recall that $q\geq 2$. Every graph automorphism $\kappa$ of $G$ is an incidence preserving permutation of the vertices and edges of $G$ and hence determines two mappings. First, $\kappa$ clearly acts on the subsets of $E(G)$ via its action on the $q$ edges of $G$. Second, $\kappa$ induces a group automorphism on $S_p$ via its action on the $p$ vertices of $G$, namely through conjugation in $S_p$ by $\kappa$; in particular, if $e$ is an edge of $G$, then $\kappa(e)$ is also an edge of $G$ and $\kappa\tau_{e}\kappa^{-1}=\tau_{\kappa(e)}$ in $S_p$. Hence, $\kappa$ also determines the following mapping on ${\cal P}_G$, again denoted by $\kappa$:
\begin{equation}\label{semd}
(K,\alpha) \rightarrow (\kappa(K),\alpha^{\kappa}) 
\qquad (K\subseteq E(G),\, \alpha\in S_p), 
\end{equation}
with $\alpha^{\kappa}:=\kappa\alpha\kappa^{-1}$. We need to show that this is indeed a polytope automorphism of ${\cal P}_G$. 

First note that in $S_p$ we have  
\[ T_{\kappa(K)} = \langle\tau_{\kappa(e)}\mid e\in K\rangle
=\kappa\langle\tau_{e}\mid e\in K\rangle\kappa^{-1} 
=\kappa T_K \kappa^{-1} \]
and hence also 
\[ T_{\kappa(K)}\alpha^{\kappa}
=\kappa T_K \kappa^{-1}\kappa\alpha\kappa^{-1}
=\kappa T_K\alpha \kappa^{-1}. \]
Therefore, if $K\subseteq L$ and $\alpha,\beta\in S_p$, then $T_{K}\alpha\subseteq T_{L}\beta$ if and only if $T_{\kappa(K)}\alpha^{\kappa}\subseteq T_{\kappa(L)}\beta^{\kappa}$. In ${\cal P}_G$, then this says that $(K,\alpha)\leq (L,\beta)$ if and only if $(\kappa(K),\alpha^{\kappa})\leq(\kappa(L),\beta^{\kappa})$. Thus $\kappa$ is an incidence preserving bijection, that is, a polytope automorphism of ${\cal P}_G$.

By slight abuse of notation, we let $\Gamma(G)$ also denote the subgroup of $\Gamma({\cal P}_G)$ consisting of all polytope automorphism $\kappa$ of ${\cal P}_G$ derived in this way from graph automorphisms of $G$. Note here that this is a faithful copy of the group of graph automorphisms of $G$; in fact, if every face $(K,\alpha)$ in (\ref{semd}) is fixed under $\kappa$, then the corresponding graph automorphism must map every subset $K$ of $E(G)$ to itself and so must be the identity mapping since $G$ is connected. 

At this point we know that both $S_p$ and $\Gamma(G)$ can be viewed naturally as subgroups of $\Gamma({\cal P}_G)$. These subgroups intersect only trivially, since every polytope automorphism in $S_p$ leaves the first component $K$ of every face $(K,\alpha)$ unchanged, while only the trivial polytope automorphism in $\Gamma(G)$ has this property. Moreover, the subgroup $S_p$ is invariant under conjugation in $\Gamma({\cal P}_G)$ by a polytope automorphisms $\kappa$ in $\Gamma(G)$. In fact, if $\gamma\in S_p$, then the polytope automorphism $\kappa\gamma\kappa^{-1}$ takes the face $(K,\alpha)$ of ${\cal P}_G$ to
\[ (K,(\alpha^{\kappa^{-1}}\gamma)^{\kappa})
= (K,\alpha\gamma^{\kappa}),\]
and hence must coincide with the polytope automorphism determined by the element $\gamma^{\kappa}$ of the underlying symmetric group $S_p$; as a graph automorphism, $\kappa$ is a permutation of the vertices, so $\gamma^{\kappa}$ really is an element of $S_p$. Thus $S_p$ is normalized by $\Gamma(G)$, and
we have a semi-direct product $S_{p}\ltimes\Gamma(G)$ realized as a subgroup of $\Gamma({\cal P}_G)$. 

It remains to show that every polytope automorphism of ${\cal P}_G$ lies in this semi-direct product. Suppose $\rho$ is a polytope automorphism of ${\cal P}_G$. 
Then $\rho$ takes vertices of ${\cal P}_G$ to vertices of ${\cal P}_G$, and in particular takes $(\emptyset,\epsilon)$ to a vertex $(\emptyset,\gamma)$ (say) with 
$\gamma\in S_p$. (As before, $\epsilon$ denotes the identity element in $S_p$.)
Under $\gamma^{-1}$, viewed as a polytope automorphism of ${\cal P}_G$, this new vertex is mapped back to $(\emptyset,\epsilon)$. It follows that the polytope automorphism $\kappa:=\gamma^{-1}\rho$ maps $(\emptyset,\epsilon)$ to itself and hence yields a polytope automorphism of the vertex-figure of ${\cal P}_G$ at this vertex. Our goal is to show that $\kappa$ is contained in the subgroup $\Gamma(G)$ of $\Gamma({\cal P}_G)$. Then this would imply
\[ \rho = \gamma\kappa \in S_{p}\cdot\Gamma(G) ,\]
as required.

We know that a polytope automorphism is completely determined by its effect on a single flag; this is implied by the strong flag-connectedness. Now pick any flag of ${\cal P}_G$ of the form $\Phi:=({\cal K},\epsilon)$, where, as usual, ${\cal K}=\{\emptyset,K_1,\ldots,K_q\}$ is a maximal nested family of subsets of $E(G)$. Note that $(\emptyset,\epsilon)$ is the vertex contained in $\Phi$. Since $\kappa$ preserves the vertex-figure of ${\cal P}_G$ at $(\emptyset,\epsilon)$, the image of $\Phi$ under $\kappa$ is a flag $\Lambda:=({\cal L},\epsilon)$, where ${\cal L}=\{\emptyset,L_1,\ldots,L_q\}$ is a maximal nested family of subsets of $E(G)$. In particular,
$\kappa$ maps $(K_j,\epsilon)$ to $(L_{j},\epsilon)$ for each $j=1,\ldots,q$. 

Suppose the edges of $G$ have been labeled $f_1,\ldots,f_q$ and $g_1,\ldots,g_q$, so that $K_{j}=\{f_1,\ldots,f_j\}$ and $L_{j}=\{g_1,\ldots,g_j\}$ for each $j$. Define the permutation $\kappa'$ of $E(G)$ by $\kappa'(f_j)=g_j$ for each $j$. Then $\kappa'(K_j)=L_j$ for each $j$, so in particular, $\kappa$ maps $(K_j,\epsilon)$ to $(\kappa'(K_j),\epsilon)$ for each $j$. Suppose we already know that $\kappa'$ is the edge permutation of $G$ determined by a graph automorphism of $G$, again denoted by $\kappa'$. Then (\ref{semd}) shows that $\kappa'$, viewed as a polytope automorphism, maps the face $(K_j,\epsilon)$ of ${\cal P}_G$ to 
\[ (\kappa'(K_j),\epsilon^{\kappa'})
=(\kappa'(K_j),\epsilon)=(L_j,\epsilon), \]
for each $j$, and therefore maps $\Phi$ to $\Lambda$. Since polytope automorphisms are uniquely determined by their effect on a single flag, this would imply that $\kappa=\kappa'\in\Gamma(G)$, as desired.

Therefore we must prove that $\kappa'$ is the edge permutation of $G$ given by a graph automorphism of $G$. Here it remains to find the action of the latter on the vertices of $G$. We show that $\kappa'$ determines a vertex permutation $\kappa''$ (say) of $G$, such that $\kappa'$ and $\kappa''$ together yield an incidence preserving bijection of $G$, that is, a graph automorphism of $G$. 

First note that a graph automorphism of a connected graph is completely determined by its effect on the vertices of valency at least $2$ and their neighbors of valency $1$. Thus we may ignore vertices of $G$ of valency $1$, since we know the effect on the edges that contain them. Let $j$ be a vertex of $G$ of valency at least 2, and let $E(j)$ be the set of edges of $G$ that contain vertex $j$. It suffices to show that, given any vertex $j$ of $G$ of valency at least $2$, the image of $E(j)$ under $\kappa'$ is contained in a set $E(j'')$ for some vertex $j''$ of $G$ (which then trivially must have valency at least $2$). In fact, once this has been proved, we can simply define the desired vertex permutation $\kappa''$ by $\kappa''(j)=j''$ for any vertex $j$ of $G$ of valency at least $2$; this determines $\kappa''$ completely, and we are done. 

The key for the final step is to remember that $\kappa'$ was derived from the above polytope automorphism $\kappa$ of the underlying graphicahedron ${\cal P}_G$.  We know that the face structure of ${\cal P}_G$ is completely determined by the structure of $G$. For example, the $2$-face $(\{e,f\},\epsilon)$ of ${\cal P}_G$ is a hexagon or square, depending on whether or not $e,f$ are edges of $G$ with a common vertex; note here that 
$T_{\{e,f\}}= \langle\tau_e,\tau_f\rangle = S_3$ or $C_{2}\times C_{2}$, according as $e$ and $f$ share a vertex or not. On the other hand, polytope automorphisms preserve isomorphism types of faces, so $\kappa$ must necessarily map square $2$-faces to square $2$-faces and hexagonal $2$-faces to hexagonal $2$-faces; in particular, $T_{\{\kappa'(e),\kappa'(f)\}}\cong T_{\{e,f\}}$. Hence, the edges $e$ and $f$ of $G$ share a common vertex in $G$ if and only if their images $\kappa'(e)$ and $\kappa'(f)$ share a common vertex in $G$.

Finally, then, let $j$ be any vertex of $G$ of valency $s\geq 2$, and let $E(j)$ be as above. When $s=2$ we are done by the previous argument; in fact, if $e$ and $f$ are the two edges of $E(j)$ (sharing vertex $j$), then $\kappa'(e)$ and $\kappa'(f)$ also share a vertex, $j''$, and $\kappa'(E(j))\subseteq E(j'')$. Suppose $s\geq 3$. 
We now consider $3$-faces of ${\cal P}_G$. If $e,f,g$ are any edges in $E(j)$ (all sharing the vertex $j$), then we must have
$T_{\{e,f,g\}}= \langle\tau_e,\tau_f,\tau_g\rangle = S_4$; in fact, if $k,l,m$ (say) are the end vertices of $e,f,g$ distinct from $j$, respectively, then 
\[ \tau_e = (j\;k),\;\,  \tau_f = (j\;l),\;\, \tau_g = (j\;m), \]
and these transpositions generate all permutations of $j,k,l,m$. In particular, the $3$-face $(\{e,f,g\},\epsilon)$ of ${\cal P}_G$ must have $24$ vertices, so the same must be true for its image $(\{\kappa'(e),\kappa'(f),\kappa'(g)\},\epsilon)$ under $\kappa$. Hence 
$T_{\{\kappa'(e),\kappa'(f),\kappa'(g)\}}
= \langle\tau_{\kappa'(e)},\tau_{\kappa'(f)},\tau_{\kappa'(g)}\rangle$ 
must also have order $24$. Since we already know from our previous considerations that any two of $\kappa'(e),\kappa'(f),\kappa'(g)$ share a common vertex in $G$,
this now forces all three to have a vertex in common (otherwise we would only obtain a subgroup $S_3$). It follows that $\kappa'$ maps any three edges $e,f,g$ in $E(j)$ to three concurrent edges. However, if any three edges in $E(j)$ are mapped to three concurrent edges, then the entire set $E(j)$ is mapped to a set of edges of $G$ all sharing a common vertex, $j''$. Thus $\kappa'(E(j))\subseteq E(j'')$, and the proof is complete.
\end{proof} 

Note that Theorem~\ref{graphsaut} fails if $G$ is a connected graph with only one edge. In fact, in this case ${\cal P}_G$ is a $1$-polytope with automorphism group $S_2$, while $\Gamma(G)$ also has order $2$ (but has a trivial action on the edge set of $G$). 

The following theorem characterizes the graphicahedra which are regular polytopes. By $P_n$ and $C_n$, respectively, we denote the path or cycle of length $n$ (with $n$ edges), and by $K_{1,n}$ the star graph with $n$ edges emanating from a central vertex. Note that $K_{1,n}=P_n$ if $n=0,1,2$, so in particular $K_{1,0}$ is the trivial graph (with a single vertex and no edge).

\begin{corollary}\label{allsq}
Let $G$ be a connected $(p,q)$-graph. Then ${\cal P}_G$ is regular if and only if $G=C_3$ or $G=K_{1,q}$ with $q\geq 0$.
\end{corollary}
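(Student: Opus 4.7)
The plan is to translate regularity of $\mathcal{P}_G$ into a purely graph-theoretic condition on $\Gamma(G)$ and then classify the graphs satisfying it. The cases $q=0,1$ are trivial, since $\mathcal{P}_G$ is then a point or a segment (hence automatically regular) and the only connected graphs are $K_{1,0}$ and $K_{1,1}$; so I would assume $q\geq 2$ throughout. By Theorem~\ref{theo1}, the polytope has $p!\,q!$ flags and $S_p$ acts simply transitively on its vertices, while Theorem~\ref{graphsaut} gives $|\Gamma(\mathcal{P}_G)|=p!\,|\Gamma(G)|$. Since every $\kappa\in\Gamma(G)\leq\Gamma(\mathcal{P}_G)$ fixes the vertex $(\emptyset,\epsilon)$ (because $\kappa(\emptyset)=\emptyset$ and $\epsilon^{\kappa}=\epsilon$), while no nontrivial element of $S_p$ does, the stabilizer of $(\emptyset,\epsilon)$ in $\Gamma(\mathcal{P}_G)$ is precisely $\Gamma(G)$. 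The flags through $(\emptyset,\epsilon)$ are the $(\mathcal{K},\epsilon)$ with $\mathcal{K}$ a maximal nested family of subsets of $E(G)$, and these biject with the $q!$ orderings of $E(G)$, on which $\kappa$ acts by permuting entries. Thus $\mathcal{P}_G$ is regular if and only if $\Gamma(G)$ induces the full symmetric group $S_q$ on $E(G)$.

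The next step is to classify the connected $(p,q)$-graphs with $q\geq 2$ whose automorphism group induces $S_q$ on $E(G)$. Such a group acts $2$-transitively on $E(G)$, so any two pairs of distinct edges are $\Gamma(G)$-equivalent; in particular, either every pair of edges of $G$ shares a vertex or no pair does. The second option makes $G$ a matching, which is disconnected when $q\geq 2$, so every two edges of $G$ share a vertex. A short Helly-style case analysis then finishes the classification: fixing two edges $e_1=\{v,a\}$ and $e_2=\{v,b\}$, any third edge $e$ either contains $v$ or must meet each of $e_1,e_2$ outside $v$, forcing $e=\{a,b\}$. In the second case $\{e_1,e_2,\{a,b\}\}$ is a triangle on $\{v,a,b\}$, and any fourth edge sharing a vertex with each of these three must be a $2$-subset of $\{v,a,b\}$, hence one of the three existing edges; so $q=3$ and $G=C_3$. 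Otherwise every edge of $G$ contains $v$, making $G=K_{1,q}$.

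To close the converse direction I would simply observe that both families satisfy the criterion of the first paragraph: $\Gamma(K_{1,q})\cong S_q$ freely permutes the $q$ leaves, hence the $q$ edges, while $\Gamma(C_3)\cong S_3$ freely permutes the three sides of the triangle. Combining with the first paragraph then gives the stated biconditional. The main obstacle in the argument is really the combinatorial classification in the middle paragraph; the rest assembles straightforwardly from Theorems~\ref{theo1} and~\ref{graphsaut} together with the description of the flags through a single vertex.
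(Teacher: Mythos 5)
Your proof is correct, and it reaches the paper's intermediate criterion --- ${\cal P}_G$ is regular if and only if $\Gamma(G)$ induces the full symmetric group $S_q$ on $E(G)$ --- but then classifies the relevant graphs by a genuinely different argument. The paper derives the criterion from the fact that a polytope is regular if and only if its automorphism group has order equal to the number of flags (\cite[2A5]{McMS02}), combined with $|\Gamma({\cal P}_G)|=p!\,|\Gamma(G)|$ and the $p!q!$ flag count; your route via the vertex stabilizer and the flags through $(\emptyset,\epsilon)$ is an equivalent bookkeeping of the same orbit count and is equally valid. The real divergence is in the classification. The paper first rules out cycles other than $C_3$ by examining a shortest cycle $C$ and the edge permutations that replace one edge of $C$ while fixing the rest (forcing $E(G)=E(C)$ and then $q=3$ from the dihedral group), and then splits the acyclic case by maximum valency into the path $P_2$ and the stars $K_{1,q}$. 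You instead extract only the $2$-transitivity of $S_q$ on $E(G)$, conclude that the relation ``share a vertex'' is constant on pairs of edges, use connectedness to see it must hold for every pair, and then invoke the classical fact that a graph in which every two edges meet is a triangle or a star. Your version is arguably cleaner: it uses less of the group structure (no need to realize specific edge transpositions as graph automorphisms) and isolates the combinatorics into a self-contained Helly-style lemma, and it also absorbs the paper's separate $P_2=K_{1,2}$ subcase automatically. One terminological quibble: in the converse direction $\Gamma(C_3)\cong S_3$ and $\Gamma(K_{1,q})\cong S_q$ do not act \emph{freely} on the edges (edge stabilizers are nontrivial); what you need, and what is true, is that they induce the full symmetric group on the edge set.
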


\begin{proof}
If $q=0$ or $1$, then ${\cal P}_G$ is a $0$- or $1$-polytope and hence is regular; in this case $G=K_{1,q}$. Hence we may assume that $q\geq 2$.

A polytope is regular if and only if the order of its automorphism group is the same as the number of its flags (see \cite[2A5]{McMS02}). By Theorem~\ref{graphsaut}, $\Gamma({\cal P}_{G})=S_{p}\ltimes\Gamma(G)$; and by Theorem~\ref{theo1}, ${\cal P}_G$ has $p!q!$ flags. Hence ${\cal P}_G$ is regular if and only if $\Gamma(G)$ has order $q!$. Since graph automorphisms are completely determined by their effect on the edges of the graph, the latter is equivalent to saying that $\Gamma(G)=S_q$, or, that every permutation of edges of $G$ arises from a graph automorphism of $G$. 

It remains to enumerate the graphs $G$ with $q$ edges and with $\Gamma(G)=S_q$. Now suppose $G$ is a graph with $q$ edges and with $\Gamma(G)=S_q$.

First we show that $G$ is a tree unless $G=C_3$. Suppose $C$ is a cycle of smallest length, $k$ (say), in $G$. Let $f_1,\ldots,f_k$ denote the edges of $C$, in cyclic order. Now,
since $\Gamma(G)=S_q$, if $f$ is any edge of $G$, then the edges $f,f_2,\ldots,f_k$ must also form a $k$-cycle, $C_f$ (say), of $G$; in fact, any edge permutation that maps $f_1$ to $f$ and fixes each $f_j$ for $j\geq 2$ must necessarily come from a graph automorphism that maps $C$ to a $k$-cycle of $G$. But then $C_{f}=C$ and hence $f=f_{1}$, since the two cycles share all but one edge (double edges are not permitted). Hence the edges of $C$ comprise all the edges of $G$, and hence $G$ itself is a cycle of length $k=q$. The automorphism group of a cycle of length $q$ is a dihedral group of order $q$, and is isomorphic to $S_q$ only if $q=3$. This leaves $C_3$ as the only possibility. Thus $G$ is acyclic unless $G=C_3$.

If $G$ has only vertices of valency at most $2$, then $G$ is the path of length $q$. Since the automorphism group of a path of any length has order $2$, this forces $q=2$, that is, $G=P_{2}=K_{1,2}$.

Now suppose $G$ has a vertex $j$ (say) of valency at least $3$. Let $e,f,g$ be edges of $G$ containing $j$. If $e'$ is any edge of $G$, then $e'$ must also contain $j$; in fact, any edge permutation that maps $e$ to $e'$ and fixes $f$ and $g$ must necessarily come from a graph automorphism that fixes the vertex $j$ and hence takes $e$ to an edge, $e'$, that also contains $j$. Thus every edge of $G$ contains the vertex $j$ of $G$. It follows that $G=K_{1,q}$.

In summary, we have shown that $G$ must be a graph $C_3$ or $K_{1,q}$ with $q\geq 2$. Conversely, each such graph $G$ has $S_3$ or $S_q$, respectively, as its automorphism group, so that ${\cal P}_G$ must be regular. 
\end{proof}

\section{Examples}
\label{examples}
In this section we discuss in more detail the geometric structure of the graphicahedron ${\cal P}_G$ for some specific choices of $(p,q)$-graph $G$. Recall that $G$ must be connected.

We begin with the path $G=P_n$ of length $n$, for which $p=n+1$ and $q=n$. As mentioned earlier, the graphicahedron associated with a given graph $G$ generalizes the permutahedron $\Pi_{n}$.  The following theorem shows that ${\cal P}_{G}$ is in fact isomorphic to $\Pi_{n}$ if $G=P_n$. 

\begin{theorem}
\label{theo3}
The graphicahedron ${\cal P}_{P_n}$ associated with a path $P_n$ of length $n$ is isomorphic to the face-lattice of the permutahedron $\Pi_{n}$ of rank $n$.
\end{theorem}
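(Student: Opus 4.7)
The plan is to realize both posets as the lattice of ordered set partitions of $\{1,\ldots,n+1\}$ under refinement. Label $V(P_n)=\{1,\ldots,n+1\}$ and $E(P_n)=\{e_1,\ldots,e_n\}$ with $e_i=\{i,i+1\}$. It is classical that the face lattice of $\Pi_n$ is isomorphic to the lattice of ordered partitions of $\{1,\ldots,n+1\}$, where $(A_1,\ldots,A_m)\leq(B_1,\ldots,B_r)$ iff each $B_k$ is a union of consecutive $A_j$'s (say $B_k=A_{i_{k-1}+1}\cup\cdots\cup A_{i_k}$), and the matching face has rank $n+1-m$. On the graphicahedron side, for $K\subseteq E(P_n)$ the spanning subgraph $\widetilde{K}$ is a disjoint union of subpaths whose vertex sets $V_1,\ldots,V_m$ form an ordered sequence of consecutive intervals of $\{1,\ldots,n+1\}$ with $m=n+1-|K|$, and $T_K=S_{V_1}\times\cdots\times S_{V_m}$. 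Since $P_n$ is a tree, Remark~\ref{remark} identifies faces of ${\cal P}_{P_n}$ with right cosets $T_K\alpha$ and the partial order with coset inclusion.

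Next I would define the bijection
\[ \Theta(K,\alpha):=(\alpha^{-1}(V_1),\ldots,\alpha^{-1}(V_m)). \]
This is well-defined on equivalence classes: any $\beta=\sigma\alpha\in T_K\alpha$ with $\sigma\in T_K$ satisfies $\beta^{-1}(V_j)=\alpha^{-1}\sigma^{-1}(V_j)=\alpha^{-1}(V_j)$, since $\sigma$ fixes each $V_j$ setwise. For surjectivity, given an ordered partition $(A_1,\ldots,A_m)$, the block sizes uniquely determine $K$ via $|V_j|=|A_j|$, and any $\alpha\in S_{n+1}$ mapping $A_j$ to $V_j$ for all $j$ is a preimage. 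Injectivity follows because $\Theta(K,\alpha)=\Theta(K',\alpha')$ forces equal block sizes (hence $K=K'$) and then $\alpha'\alpha^{-1}\in T_K$, so the cosets coincide. Ranks match automatically: ${\rm rank}(K,\alpha)=|K|=n+1-m$.

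For order preservation, assume $(K,\alpha)\leq(L,\beta)$. Since this implies $T_L\alpha=T_L\beta$, we may take $\beta=\alpha$; then $T_K\subseteq T_L$, so $K\subseteq L$, and each component $W_k$ of $\widetilde{L}$ decomposes as a disjoint union $V_{i_{k-1}+1}\cup\cdots\cup V_{i_k}$ of consecutive components of $\widetilde{K}$. Applying $\alpha^{-1}$ yields $\alpha^{-1}(W_k)=A_{i_{k-1}+1}\cup\cdots\cup A_{i_k}$, which is precisely the refinement relation placing $\Theta(K,\alpha)$ below $\Theta(L,\alpha)$. The reverse implication runs in parallel: a refinement of ordered partitions identifies the $W_k$'s as consecutive unions of $V_j$'s, producing $K\subseteq L$, and a single $\alpha$ with $\alpha(A_j)=V_j$ for all $j$ simultaneously represents both cosets, yielding $T_K\alpha\subseteq T_L\alpha$. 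After prepending the $(-1)$-face to both posets, $\Theta$ becomes the desired isomorphism. The only real subtlety is conceptual: one must recognize that the consecutive-interval structure of the subpaths of $P_n$ is exactly what turns the $V_j$'s (and thus their $\alpha^{-1}$-images) into \emph{ordered}, rather than merely unordered, partitions. After this observation, everything reduces to Remark~\ref{remark} and the classical coset description of faces of $\Pi_n$.
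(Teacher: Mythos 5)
Your proof is correct, but it follows a genuinely different route from the paper's. The paper outlines an argument via Coxeter group theory: it realizes $\Pi_n$ by Wythoff's construction applied to an interior point $x$ of the fundamental simplex for $S_{n+1}$, identifies the boundary complex of $\Pi_n$ with the dual of the barycentric subdivision of the boundary of the regular $n$-simplex, and then matches the face $(K,\alpha)$ of ${\cal P}_{P_n}$ with the convex hull of the $T_K$-orbit of $\alpha(x)$. You instead build an explicit combinatorial isomorphism onto the lattice of ordered set partitions of $\{1,\ldots,n+1\}$, using the fact that for a path the components of $\widetilde{K}$ are consecutive intervals whose ordered sequence of $\alpha^{-1}$-images is an ordered partition, and that $T_K$ is the Young subgroup $S_{V_1}\times\cdots\times S_{V_m}$. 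Your verification of well-definedness, bijectivity, rank compatibility, and order preservation in both directions is complete and correct; the appeal to Remark~\ref{remark} is harmless though not strictly necessary, since the definition of the partial order already contains both conditions you use. What each approach buys: the paper's argument places the result in the general framework of Wythoffian polytopes (so the same reasoning identifies $1$-skeleta and face structure for any ringed Coxeter diagram), but it is only an outline and delegates the face-to-coset correspondence to classical geometry; your argument is self-contained and elementary modulo one standard fact --- the ordered-partition description of the face lattice of $\Pi_n$ --- and it makes the isomorphism completely explicit, which is arguably a cleaner and more checkable proof of this particular theorem.
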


\begin{proof}
If we knew that ${\cal P}_{P_n}$ could be realized as a convex $n$-polytope in some euclidean space, then, bearing in mind Lemma~\ref{isopermu} and Theorem~\ref{theo2}, we could simply appeal to the well-known fact that a simple convex polytope of a given dimension is uniquely determined up to isomorphism by its $1$-skeleton (see \cite{BlindMan87,kalai88}). However, we do not a priori have such a realization and must proceed in a different manner. We outline the proof.

We use some basic Coxeter group theory for the symmetric group $S_{n+1}$, or, equivalently, the symmetry group of the regular $n$-simplex. The permutahedron $\Pi_n$ was defined earlier as the convex hull of the $(n+1)!$ points (in a hyperplane) in $\mathbb{R}^{n+1}$ obtained from $(1,2,\ldots,n+1)$ by permuting the coordinates in all possible ways. However, $\Pi_n$ has other equivalent realizations. In fact, $\Pi_n$ can also be obtained from Wythoff's construction applied to an interior point $x$ of the standard fundamental simplex for the symmetry group $S_{n+1}$ of the regular $n$-simplex (see Coxeter~\cite{RP}); in other words, $\Pi_n$ is the convex hull of all images of $x$ under this group. Since the (interior) walls of the fundamental simplex are in one-to-one correspondence with the standard generating system of adjacent transpositions $(1\;2), (2\;3),\ldots,(n\;n+1)$ of $S_{n+1}$, it is immediately clear that the $1$-skeleton of $\Pi_n$ is the Cayley graph of $S_{n+1}$. In the notation of \cite[\S 11.6-7]{RP}, $\Pi_n$ may be represented by a Coxeter diagram $A_n$ in which all nodes are ringed, and the structure of its faces can be read off this diagram. Combinatorially, the boundary complex of $\Pi_n$ is the dual of the barycentric subdivision of the boundary complex of the regular $n$-simplex. Comparison with the graphicahedron ${\cal P}_{P_n}$ then shows that the faces of $\Pi_n$ may be viewed as geometric realizations of those of ${\cal P}_{P_n}$. In particular, a $0$-face $(\emptyset,\alpha)$ of ${\cal P}_G$ corresponds to the vertex $\alpha(x)$ of $\Pi_n$, and, more generally, a face $(K,\alpha)$ of ${\cal P}_G$ corresponds (via the coset $T_K\alpha$) to the convex hull of the images of $\alpha(x)$ under the subgroup $T_K$ of the symmetry group $S_{n+1}$. Thus the graphicahedron ${\cal P}_{P_n}$ is isomorphic to the face-lattice of $\Pi_n$.
\end{proof}

Next we analyse the graphicahedra associated with more general kinds of connected graphs $G$. We are particularly interested in graphs with few vertices and edges. 

For the $n$-cycle $C_n$ with $n=3$ or $4$ it is not hard to verify directly that ${\cal P}_{C_n}$ is a vertex-transitive tessellation of the $2$-torus or $3$-torus, respectively. In particular, by Corollary~\ref{allsq}, ${\cal P}_{C_3}$ is a regular $3$-polytope isomorphic to the toroidal map $\{6,3\}_{(1,1)}$ (see \cite{cm}). The structure of ${\cal P}_{C_n}$ for general $n$ will be analyzed in the forthcoming 
paper \cite{del}; in fact, ${\cal P}_{C_n}$ is always a tessellation on the $(n-1)$-torus.

Next let $K_{1,n}$ be a star graph with $n$ edges emanating from a central vertex, so that $p=n+1$ and $q=n$. Then ${\cal P}_{K_{1,n}}$ is a regular $n$-polytope for each $n\geq 0$, with automorphism group $S_{n+1}\ltimes S_{n}$ (see Theorem~\ref{allsq}). Except for the $C_3$-graphicahedron, these are the only graphicahedra which are regular polytopes.

When $n=3$ we obtain the toroidal regular map ${\cal P}_{K_{1,3}}=\{6,3\}_{(2,2)}$. This occurs as the facet type of the regular $4$-polytope ${\cal P}_{K_{1,4}}$. In particular, ${\cal P}_{K_{1,4}}$ is the universal regular $4$-polytope $\{\{6,3\}_{(2,2)},\{3,3\}\}$ with automorphism group $S_{5}\times S_{4}$ and with $20$ toroidal facets each isomorphic to $\{6,3\}_{(2,2)}$ (see \cite[11C8]{McMS02}) and also \cite{grgcd,mshf,monw3}). Isomorphism of ${\cal P}_{K_{1,4}}$ with the universal polytope follows from that fact that both polytopes have groups of the same order; here, the semi-direct $S_{5}\ltimes S_{4}$ actually also is a direct product $S_{5}\times S_{4}$ (however, with different factors). More generally, for each $n\geq 0$, the regular $n$-polytope ${\cal P}_{K_{1,n}}$ is the facet type of the regular $(n+1)$-polytope ${\cal P}_{K_{1,n+1}}$, and the latter has $(n+1)(n+2)$ such facets; see also \cite[p.320]{stor} for a related polytope, possibly isomorphic to the graphicahedron of a star graph.

Clearly, a connected $(p,3)$-graph must necessarily be isomorphic to $P_3$, $C_3$ or $K_{1,3}$, so there are no graphicahedra of rank $3$ other than those already mentioned. This completely settles the case $q=3$. On the other hand, when $q=4$ there are two connected $(p,4)$-graphs in addition to $P_4$, $C_4$ or $K_{1,4}$ (see Figure~\ref{graph4}). The corresponding graphicahedra of rank $4$ are clearly not regular; their automorphism group is $S_5\ltimes C_2$ or $S_4\ltimes C_2$, respectively, with the factor $C_2$ arising from the graph symmetry.  For the graph on the left, the graphicahedron has twenty-five facets, ten permutahedra, five regular toroids $\{6,3\}_{(2,2)}$, and ten hexagonal prisms. For the graph on the right, the graphicahedron has seven facets, of which two are permutahedra, four are regular toroids $\{6,3\}_{(1,1)}$, and one is a regular toroid $\{6,3\}_{(2,2)}$.

\begin{figure}[h!]
\begin{center}
\includegraphics[width=10cm]{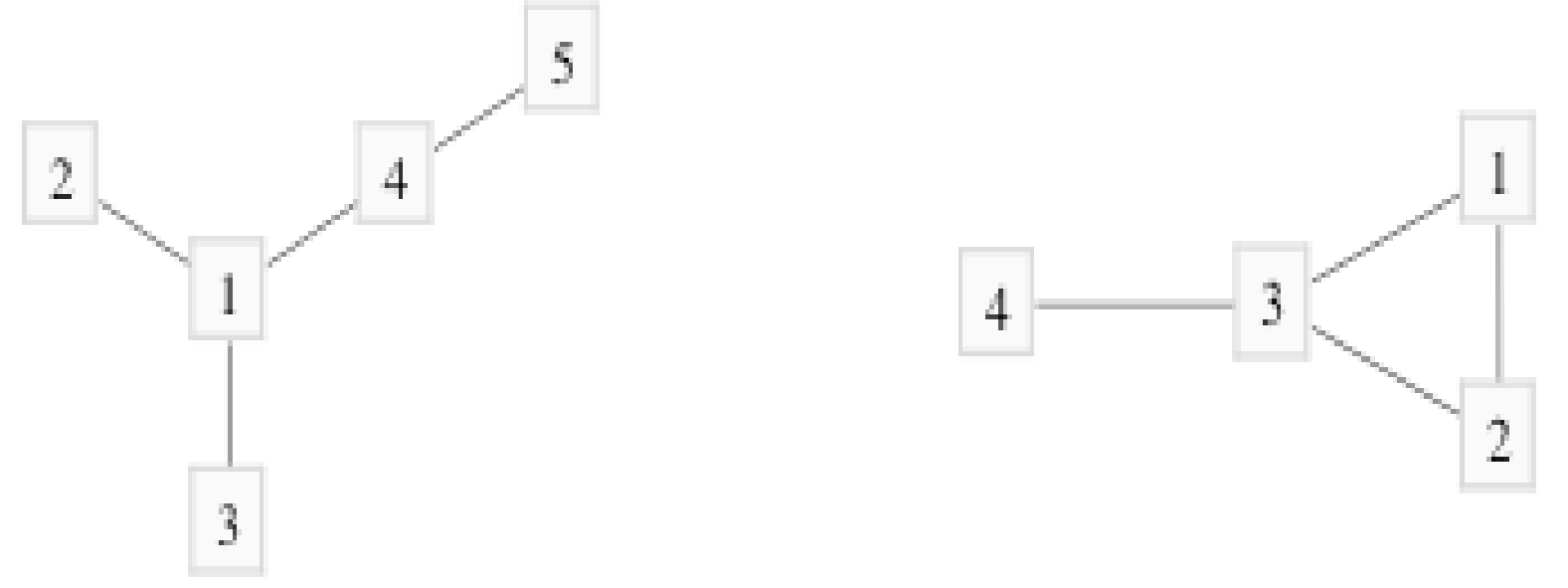}.
\caption{Graphs with four edges distinct from $P_4$, $C_4$ or $K_{1,4}$.}
\label{graph4}
\end{center}
\end{figure}

{\bf Acknowledgment:\/}  Some of the results described here were presented and discussed at the International Workshop on Polytopes and Symmetries held in March 2008 during the Coloquio Victor Neumann-Lara de Teoria de las Gr\'{a}ficas, Combinatoria y sus Aplicaciones at the Universidad Aut\'{o}noma de Zacatecas in Zacatecas, Mexico. We wish to extend our thanks to the participants, including in particular Javier Bracho, Dimitri Leemans, Barry Monson, Alen Orbanic, Daniel Pellicer, Toma\v{z} Pisanski and Asia Weiss.  

 \end{document}